\title[Dynamical Percolation]{Dynamical Percolation on General Trees}
   \thanks{Research supported in part by a grant from the National Science
   Foundation}
   \author[D. Khoshnevisan]{Davar Khoshnevisan}
   \address{Davar Khoshnevisan:\ Department\@ of Mathematics,
	   The University of Utah,
      155 S.\@ 1400 E.\@ Salt Lake City, UT 84112--0090}
   \email{davar@math.utah.edu}
   \urladdr{http://www.math.utah.edu/\~{}davar}
\keywords{Dynamical percolation; capacity; trees}
\subjclass{Primary. 60K35; Secondary. 31C15, 60J45}
\date{First draft: May 5, 2006; Final draft:
	January 17, 2007}
\theoremstyle{plain}{
\newtheorem{theorem}{Theorem}[section]}
\theoremstyle{plain}{
\newtheorem{proposition}[theorem]{Proposition}}
\theoremstyle{plain}{
   \newtheorem{lemma}[theorem]{Lemma}}
\theoremstyle{plain}{
   \newtheorem{corollary}[theorem]{Corollary}}
\theoremstyle{definition}{
   }
\theoremstyle{definition}{
   \newtheorem{example}[theorem]{Example}}
\theoremstyle{remark}{
   \newtheorem{remark}[theorem]{Remark}}
\theoremstyle{remark}{
   }
\numberwithin{equation}{section}
\newcommand{\dimh}{\dim_{_{\rm H}}}
\newcommand{\dimp}{\dim_{_{\rm P}}}
\newcommand{\1}{\mathbf{1}}
\renewcommand{\P}{\mathrm{P}}
\newcommand{\E}{\mathrm{E}}
\newcommand{\R}{\mathbf{R}}
\newcommand{\e}{\varepsilon}
\renewcommand{\mathcal}{\mathscr}
\begin{document}
\onehalfspacing
\begin{abstract}
	H\"aggstr\"om, Peres, and Steif \ycite{HaggstromEtAl}
	have introduced a dynamical version of percolation
	on a graph $G$. When $G$ is a tree they derived a necessary and
	sufficient condition for percolation to exist at
	some time $t$. In the case that $G$ is a spherically symmetric
	tree, \ocite{PeresSteif} derived a necessary and sufficient
	condition for percolation to exist at some time
	$t$ in a given target set $D$. The main result of the present paper is a
	necessary and sufficient condition for the existence of
	percolation, at some time $t\in D$, in the case
	that the underlying tree is not necessary spherically symmetric.
	This answers a question of Yuval Peres (personal communication).
	We present also a formula for the Hausdorff dimension of the
	set of exceptional times of percolation. 
\end{abstract}
\maketitle

\section{\bf Introduction}

Let $G$ be a locally finite connected graph.
Choose and fix $p\in(0\,,1)$, and let each edge 
be ``open'' or ``closed'' with respective probabilities
$p$ and $q:=1-p$; all edge assignments are made independently.
Define $\P_p$ to be the
resulting product measure on the collection of random
edge assignments.

A fundamental problem of bond percolation
is to decide when there can exist an infinite connected
cluster of open edges in $G$ \cite{Grimmett}. Choose and fix
some vertex $\rho$ in $G$, and consider the event
$\{\rho\leftrightarrow\infty\}$ that percolation occurs
through $\rho$. 
That is, let
$\{\rho\leftrightarrow\infty\}$ denote the event that
there exists an
infinite connected cluster of open edges that emanate
from the vertex $\rho$. 
Then the stated problem of percolation theory 
is, when is $\P_p\{\rho\leftrightarrow\infty\}>0$?
We may note that the positivity of $\P_p\{\rho\leftrightarrow\infty\}$
does not depend on our choice of $\rho$.

There does not seem to be a general answer
to this question, although much is known \cite{Grimmett}.
For instance, there always exists a critical probability 
$p_c$ such that 
\begin{equation}
	\P_p\left\{ \rho\leftrightarrow\infty \right\}=
	\begin{cases}
		\text{positive},&\text{if $p>p_c$},\\
		\text{zero},&\text{if $p<p_c$}.
	\end{cases}
\end{equation}
However, $\P_{p_c}\{\rho\leftrightarrow\infty\}$
can be zero for some graphs $G$, and positive for others.

When $G$ is a tree, much more is known.
In this case, \ocite{Lyons:92} has proved that
$\P_p\{\rho\leftrightarrow\infty\}>0$
if and only if there exists a probability measure
$\mu$ on $\partial G$ such that 
\begin{equation}\label{eq:Lyons}
	\iint \frac{\mu(dv)\,\mu(dw)}{p^{|v\wedge w|}}
	<\infty.
\end{equation}
In the language of population genetics, 
$v\wedge w$ denotes the ``greatest common ancestor''
of $v$ and $w$, and $|z|$ is the ``age,'' or height, of the
vertex $z$. Also, $\partial G$ denotes the boundary of
$G$. This is the collection of all infinite rays, and is
metrized with the hyperbolic metric $d(v\,,w):=
\exp(-|v\wedge w|)$. It is not hard to check that
$(\partial G\,,d)$ is a compact metric space. Furthermore,
one can apply the celebrated theorem of \ocite{Frostman}
in conjunction with
Lyons's theorem to find that
$p_c=\exp(-\dimh \partial G)$, where $\dimh\partial G$
denotes the Hausdorff dimension of the metric
space $(\partial G\,,d)$.

Lyons's theorem improves on the earlier efforts of
Lyons \citelist{\ycite{Lyons:89}\ycite{Lyons:90}} and 
aspects of the work of \ocite{DubinsFreedman}
and \ocite{Evans}. \ocite{BPP}
and \ocite{Marchal} contain two different optimal
improvements on Lyons's theorem.

H\"aggstr\"om, Peres, and Steif \ycite{HaggstromEtAl} added
equilibrium dynamics to percolation problems. Next is a 
brief description.
At time zero we construct all edge assignments
according to $\P_p$. Then we update each edge weight, independently
of all others, in a stationary-Markov fashion: If an edge is 
closed then it flips to an open one at rate $p$; if an edge weight is
open then it flips to a closed edge at rate $q:=1-p$.

Let us write
$\{\rho\stackrel{t}{\leftrightarrow}\infty\}$ for the event
that we have percolation at time $t$. By stationarity,
$\P_p\{\rho\stackrel{t}{\leftrightarrow}\infty\}$ does not
depend on $t$. In particular, if
$p<p_c$ then $\P_p\{\rho\stackrel{t}{\leftrightarrow}\infty\}=0$
for all $t\ge 0$. 
If $G$ is a tree,
then $\P_p\{\rho\stackrel{t}{\leftrightarrow}\infty\}$ 
is the probability of percolation in 
the context of \ocite{Lyons:92}. The results of
\ocite{HaggstromEtAl} imply that there exists a tree $G$
such that
$\P_{p_c} (\cup_{t>0}\{\rho\stackrel{t}{\leftrightarrow}\infty\})=1$
although $\P_{p_c}\{\rho\stackrel{t}{\leftrightarrow}\infty\}=0$ for
all $t\ge 0$.
We add that, in all cases, 
the event $\cup_{t>0}\{\rho\stackrel{t}{\leftrightarrow}\infty\}$ 
is indeed measurable, and 
thanks to ergodicity has probability
zero or one.

Now let us specialize to the case that $G$ is a
\emph{spherically symmetric} tree. This means that
all vertices of a given height have the same
number of children. In this case,
\ocite{HaggstromEtAl} studied dynamical percolation on
$G$ in greater depth and proved that
for all $p\in(0\,,1)$,
\begin{equation}\label{eq:HPS}
	\P_p\left(\bigcup_{t\ge 0}
	\left\{ \rho\stackrel{t}{\leftrightarrow}\infty\right\}
	\right) =1\quad\text{if and only if}\quad
	\sum_{ l =1}^\infty \frac{p^{-l}}{ l |G_ l |}<\infty.
\end{equation}
Here, $G_n$ denotes the collection
of all vertices of height $n$, and $|G_n|$ denotes
its cardinality.
This theorem has been extended further by \ocite{PeresSteif}.
In order to describe their results we follow
their lead and consider only the non-trivial case
where $G$ is an infinite tree. In that case,
Theorem 1.4 of \ocite{PeresSteif} asserts that for all
nonrandom closed sets 
$D\subseteq[0\,,1]$,
$\P_p( \cup_{t\in D} \{
\rho\stackrel{t}{\leftrightarrow}\infty\}
)>0$ if and only if there exists
a probability measure $\nu$ on $D$ such that
\begin{equation}\label{eq:iint}
	\iint \sum_{ l =1}^\infty \frac{1}{|G_l|}
	\left( 1+ \frac qp e^{-|t-s|} \right)^l  
	\,\nu(ds)\,\nu(dt)<\infty.
\end{equation}

The principle aim of this paper is to study
general trees $G$---i.e.,\ not necessarily spherically
symmetric ones---and describe when
there is positive probability of percolation 
for some time $t$ in a given ``target set'' $D$. Our description 
(Theorem \ref{th:main})
answers a question of Yuval Peres
(personal communication), and confirms Conjecture 1
of \ocite{PemantlePeres} for a large family of concrete
target percolations. In addition,
when $D$ is a singleton,
our description recovers the  characterization \eqref{eq:Lyons}---due
to \ocite{Lyons:92}---of
percolation on general trees.

As was mentioned earlier, it can happen that 
$\P_{p_c}\{\rho\leftrightarrow\infty\}=0$,
and yet percolation occurs at some
time $t$  [$\P_{p_c}$]. Let $S(G)$ denote the collection of
all such exceptional times.
When $G$ is spherically symmetric,
\ocite{HaggstromEtAl}*{Theorem 1.6} compute the
Hausdorff dimension
of $S(G)$. Here we do the same in the case
that $G$ is a generic tree (Theorem \ref{th:dim}).
In order to do this we appeal to the theory of 
L\'evy processes
\cites{Bertoin,Khoshnevisan,Sato}; the resulting formula 
for dimension is more complicated when $G$ is
not spherically symmetric.
We apply our formula to present simple bounds for
the Hausdorff dimension of $S(G)\cap D$ for a
non-random target set $D$ in the case that $G$ is spherically symmetric
(Proposition \ref{pr:dimh:SS}). When $D$ is a regular fractal
our upper and lower bounds agree, and we obtain an 
almost-sure identity
for the Hausdorff dimension of $S(G)\cap D$.\\

\noindent\textbf{Acknowledgements.}
I am grateful to Robin Pemantle and Yuval Peres
who introduced me to probability and analysis on
trees. Special thanks are due to Yuval Peres who
suggested the main problem that is considered here,
and to David Levin for pointing out some
typographical errors. Last but not the least, I
am grateful to an anonymous referee who
read the manuscript carefully, and made a
number of excellent suggestions as well as corrections.

\section{\bf Main Results}

Our work is in terms of various capacities for which we
need some notation.

Let $S$ be a topological space, and
suppose
$f:S\times S\to\R_+\cup\{\infty\}$ is measurable
and $\mu$ is a Borel probability measure on $S$.
Then we define the $f$-energy of $\mu$ to be
\begin{equation}
	I_f(\mu) := \iint f(x\,,y)\, \mu(dx)\, \mu(dy).
\end{equation}
We define also the $f$-capacity of a Borel set $F\subseteq S$
as
\begin{equation}
	\mathrm{Cap}_f(F) := \left[ \inf_{\mu\in\mathcal{P}(F)}
	I_f(\mu)\right]^{-1},
\end{equation}
where $\inf\varnothing:=\infty$, $1/\infty:=0$, 
and $\mathcal{P}(F)$ denotes the collection of all
probability measures on $F$. Now we return to 
the problem at hand.

For $v,w\in\partial G$ and $s,t\ge 0$ define
\begin{equation}
	h\left( (v\,,s)\,; (w\,,t)\right) :=
	\left( 1+ \frac qp e^{-|s-t|}\right)^{|v\wedge w|}.
\end{equation}
\ocite{PeresSteif} have proved that
if $\P_p\{ \rho\leftrightarrow\infty\}=0$, then
for all closed sets $D\subset[0\,,1]$,
\begin{equation}\label{eq:LB}
	\P_p\left( \bigcup_{t\in D} \left\{
	\rho\stackrel{t}{\leftrightarrow}\infty \right\}
	\right) \ge \frac12\, \mathrm{Cap}_h
	\left(\partial G\times D\right).
\end{equation}
In addition, they prove that when $G$ is spherically
symmetric,
\begin{equation}\label{eq:UB}
	\P_p\left( \bigcup_{t\in D} \left\{
	\rho\stackrel{t}{\leftrightarrow}\infty \right\}
	\right) \le 960e^3 \mathrm{Cap}_h
	\left(\partial G\times D\right).
\end{equation}
Their method is based on the fact that
when $G$ is spherically symmetric one
can identify the form of
the minimizing measure in the definition
of $\mathrm{Cap}_h(\partial G\times D)$. 
In fact, the minimizing measure can be
written as the uniform measure on $\partial
G$---see \eqref{eq:unif:G}---times some probability measure on $D$.
Whence follows also \eqref{eq:iint}.

In general, $G$ is not spherically symmetric,
thus one does not know the form of the minimizing
measure. We use other arguments that are based on
random-field methods in order to obtain 
the following result. We note that the essence
of our next theorem is in its upper bound because
it holds without any exogenous conditions.

\begin{theorem}\label{th:main}
	Suppose $\P_p\{\rho\leftrightarrow\infty\}=0$. Then,
	for all compact sets $D\subseteq\R_+$,
	\begin{equation}
		\frac12\, \mathrm{Cap}_h
		\left(\partial G \times D\right) \le
		\P_p\left( \bigcup_{t\in D} \left\{
		\rho\stackrel{t}{\leftrightarrow}\infty \right\}
		\right) \le 512\, \mathrm{Cap}_h
		\left(\partial G \times D\right).
	\end{equation}
	The condition that $\P_p\{\rho\leftrightarrow\infty\}=0$
	is not needed in the upper bound.
\end{theorem}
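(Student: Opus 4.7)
The lower bound is precisely \eqref{eq:LB}, established by Peres and Steif, so all that remains is to prove the upper bound $\P_p(A) \le 512\,\mathrm{Cap}_h(\partial G \times D)$, where I write $A := \bigcup_{t \in D}\{\rho \stackrel{t}{\leftrightarrow}\infty\}$. My plan is a random-measure, second-moment argument that exploits the fact that the pair correlations of the dynamical percolation field along open paths are given precisely by the kernel $h$; the task is then to build, starting from the percolation event itself, a deterministic probability measure on $\partial G \times D$ whose $h$-energy is controlled by a constant times $1/\P_p(A)$.

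For each $(v,t)\in\partial G\times D$ and $n\ge 1$, let $v_n$ be the ancestor of $v$ at level $n$ and introduce the random field
\begin{equation}
X_n(v,t) := p^{-n}\,\1_{\{\rho\stackrel{t}{\leftrightarrow} v_n\}}.
\end{equation}
Each edge's state is a two-state continuous-time Markov chain with stationary open-probability $p$ and two-time correlation $p(p + q\,e^{-|s-t|})$, so independence across edges yields $\E X_n(v,t) = 1$ and
\begin{equation}
\E[X_n(v,s)\,X_n(w,t)] = \left(1 + \tfrac{q}{p}\,e^{-|s-t|}\right)^{|v_n \wedge w_n|},
\end{equation}
which increases to $h((v,s),(w,t))$ as $n\to\infty$. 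For any Borel probability measure $\nu$ on $\partial G \times D$, the integrals $Z_n(\nu) := \int X_n\,d\nu$ form a nonnegative martingale with $\E Z_n(\nu)=1$ and $\E Z_n(\nu)^2\uparrow I_h(\nu)$, so when $I_h(\nu) < \infty$ the limit $Z_\infty(\nu)$ exists in $L^2$ and the associated random measure is supported on the percolating pairs. Applying Paley-Zygmund to $Z_\infty(\nu)$ and optimizing over $\nu$ in fact recovers (after a standard limiting argument) the lower bound \eqref{eq:LB}.

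For the upper bound I would proceed in the reverse direction. Assuming $\P_p(A)=\alpha>0$, I would define a random probability measure $\pi_\omega$ on $\partial G\times D$ concentrated on $\{(v,t):\rho \stackrel{t}{\leftrightarrow} v\}$---for instance a flow-weighted (max-flow or harmonic) sample of rays through the percolating subtree, combined with a time drawn uniformly from the random set of percolation times in $D$---and set $\tilde\nu := \alpha^{-1}\,\E[\pi_\omega\,\1_A]$, a deterministic probability measure on $\partial G\times D$. The crux is to prove $I_h(\tilde\nu) \le C/\alpha$ for an explicit universal constant $C$; by the definition of $\mathrm{Cap}_h$ this gives $\mathrm{Cap}_h(\partial G\times D) \ge \alpha/C$, and hence $\P_p(A)\le C\,\mathrm{Cap}_h(\partial G\times D)$, with $C$ ultimately equal to $512$ after accounting for the numerical losses in the moment-to-probability passage.

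The hard part is the construction of $\pi_\omega$ and the accompanying energy estimate when $G$ is not spherically symmetric. In the symmetric case of Peres--Steif the optimal $\nu$ factors as a uniform measure on $\partial G$ times a measure on $D$, and $\pi_\omega$ admits a canonical choice that forces the $h$-energy to essentially collapse to the sum in \eqref{eq:iint}. For a general tree this factorization is lost and $\pi_\omega$ must be built adaptively on the random percolating subtree; bounding $\E\bigl[\iint h((v,s),(w,t))\,\pi_\omega(dv\,ds)\,\pi_\omega(dw\,dt)\bigr]$ in terms of $\alpha$ is precisely where the pointwise moment identity $\E[X_n(v,s)X_n(w,t)]\uparrow h((v,s),(w,t))$ and the random-field / martingale machinery will have to do the real work.
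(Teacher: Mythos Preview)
Your proposal correctly identifies the overall shape of the argument---build a deterministic probability measure on $\partial G\times D$ whose $h$-energy is at most $C/\P_p(A)$---but it stops precisely where the actual difficulty lies. You write that ``the hard part is the construction of $\pi_\omega$ and the accompanying energy estimate'' and then leave both undone. That is the whole theorem. Two concrete problems with the sketch as it stands:

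First, the suggested construction of $\pi_\omega$ does not make sense in the regime of interest. When $\P_p\{\rho\leftrightarrow\infty\}=0$ the set of percolation times in $D$ has Lebesgue measure zero almost surely, so ``a time drawn uniformly from the random set of percolation times'' is undefined. Likewise, ``flow-weighted sample of rays through the percolating subtree'' is too vague to carry an energy bound; the whole point in the non--spherically-symmetric case is that no canonical flow is available, and an arbitrary choice will not have controllable $h$-energy. Second, the quantity you propose to bound at the end, $\E\bigl[\iint h\,d\pi_\omega\,d\pi_\omega\bigr]$, is not the energy $I_h(\tilde\nu)$ of the averaged measure $\tilde\nu=\E[\pi_\omega\mid A]$; the latter involves two \emph{independent} copies of $\omega$, and there is no automatic inequality in the direction you need.

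The paper's proof supplies the missing idea, and it is quite different from yours. One reduces to a finite tree of height $n$, embeds it in the plane, and uses the planar order together with the time order to define four partial orders $<_{(\sigma,\tau)}$ on $\partial G\times\R_+$ that jointly totally order the space. Associated to each is a two-parameter \emph{commuting} filtration $\mathcal{F}_{(\sigma,\tau)}$. One then picks, on the event $A$, a \emph{single} percolating pair $(\mathbf{w},\mathbf{s})\in\partial G\times D$ and lets $\mu$ be its conditional law given $A$---no random measure, no flow. The Markov-property computation gives, almost surely on $\{\rho\stackrel{s}{\leftrightarrow}w\}$,
\[
\sum_{\sigma,\tau}\E_p\!\left[Z(\mu)\,\middle|\,\mathcal{F}_{(\sigma,\tau)}(w,s)\right]\ \ge\ \int h\bigl((w,s);(v,t)\bigr)\,\mu(dv\,dt),
\]
and substituting $(w,s)=(\mathbf{w},\mathbf{s})$ makes the right side, in $L^2(\P_p\,|\,A)$, at least $I_h(\mu)$. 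The left side is controlled by Cairoli's maximal inequality for commuting multiparameter filtrations, which gives $\E_p[(\sup\E_p[Z(\mu)\mid\mathcal{F}_{(\sigma,\tau)}])^2]\le 16\,\E_p[Z(\mu)^2]\le 32\,I_h(\mu)$; summing over the four $(\sigma,\tau)$ and squaring costs a factor $4\cdot 4\cdot 16\cdot 2=512$. Comparing the two bounds yields $\P_p(A)\le 512/I_h(\mu)\le 512\,\mathrm{Cap}_h(\partial G\times D)$. The multiparameter-martingale / Cairoli step is the genuine engine here, and nothing in your outline anticipates it.
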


Thus, we can use the preceding theorem in conjunction
with \eqref{eq:LB} to deduce the following.

\begin{corollary}\label{co:main}
	Percolation occurs
	at some time $t\in D$ if and only if 
	$\partial G\times D$ has positive $h$-capacity.
\end{corollary}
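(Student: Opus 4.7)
The plan is straightforward: the corollary follows by combining the two-sided bound in Theorem \ref{th:main} with \eqref{eq:LB}, after a case split on whether the static percolation probability $\P_p\{\rho\leftrightarrow\infty\}$ vanishes. No new machinery is required beyond what has just been proved.

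For the ``only if'' direction, I would invoke the upper bound of Theorem \ref{th:main}, which the theorem explicitly states holds \emph{without} any subcriticality hypothesis. If $\mathrm{Cap}_h(\partial G\times D)=0$, then
\begin{equation*}
	\P_p\left(\bigcup_{t\in D}\left\{\rho\stackrel{t}{\leftrightarrow}\infty\right\}\right) \le 512\,\mathrm{Cap}_h(\partial G\times D)=0,
\end{equation*}
which rules out percolation at any time in $D$.

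For the ``if'' direction, assume $\mathrm{Cap}_h(\partial G\times D)>0$. Then $\mathcal{P}(\partial G\times D)$ is nonempty, so in particular $D\neq\varnothing$. I would split on the value of $\P_p\{\rho\leftrightarrow\infty\}$. If this probability is zero, then the lower bound in Theorem \ref{th:main} (equivalently, \eqref{eq:LB}) gives
\begin{equation*}
	\P_p\left(\bigcup_{t\in D}\left\{\rho\stackrel{t}{\leftrightarrow}\infty\right\}\right) \ge \tfrac12\,\mathrm{Cap}_h(\partial G\times D)>0.
\end{equation*}
If instead $\P_p\{\rho\leftrightarrow\infty\}>0$, I pick any $t_0\in D$; by stationarity of the edge-flip dynamics, $\P_p\{\rho\stackrel{t_0}{\leftrightarrow}\infty\}=\P_p\{\rho\leftrightarrow\infty\}>0$, so \textit{a fortiori} the probability of the union over $t\in D$ is strictly positive.

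Because Theorem \ref{th:main} does all the real work, there is no serious obstacle. The only point demanding any care is that the lower bound of Theorem \ref{th:main} is formulated under the subcritical hypothesis $\P_p\{\rho\leftrightarrow\infty\}=0$, so the supercritical case must be handled separately by the trivial stationarity argument above. Measurability of the event $\bigcup_{t\in D}\{\rho\stackrel{t}{\leftrightarrow}\infty\}$ for compact $D$ has already been addressed in the introduction, so no further justification is needed.
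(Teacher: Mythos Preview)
Your argument is correct and matches the paper's own derivation, which consists of the single sentence preceding the corollary (``we can use the preceding theorem in conjunction with \eqref{eq:LB}''). Your explicit case split on whether $\P_p\{\rho\leftrightarrow\infty\}$ vanishes is more careful than the paper's terse statement, but the substance is identical.
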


We make three additional remarks.

\begin{remark}
	When $D=\{t\}$ is a singleton,
	$\mu\in\mathcal{P}(\partial G \times D)$
	if and only if $\mu(A\times B)=\nu(A)\delta_t(B)$
	for some $\nu\in\mathcal{P}(\partial G)$; also, 
	$I_h(\nu\times\delta_t) 
	=\iint p^{-|v\wedge w|}\,\nu(dv)\,\nu(dw)$.
	Therefore, Theorem \ref{th:main} contains Lyons's
	theorem \ycite{Lyons:92}, although our multiplicative constant 
	is worse than that of Lyons.
\end{remark}

\begin{remark}
	It is not too hard to modify our methods and prove that when
	$G$ is spherically symmetric,
	\begin{equation}
		\P_p\left( \bigcup_{t\in D} \left\{
		\rho\stackrel{t}{\leftrightarrow}\infty \right\}
		\right) \le \frac{512}{I_h(
		\mathfrak{m}_{_{\partial G}}\times\nu)},
	\end{equation}
	where $\mathfrak{m}_{_{\partial G}}$
	is the uniform measure on $\partial G$.
	See Theorem \ref{th:YuvalConj} below.
	From this we readily recover \eqref{eq:UB}
	with the better constant $512$ in place of
	$960e^3\approx 19282.1$. This verifies the conjecture
	of Yuval Peres that $960e^3$ is improvable
	(personal communication), although it is unlikely that
	our $512$ is optimal.
\end{remark}

\begin{remark}
	The abstract capacity condition of
	Corollary \ref{co:main} can be simplified,
	for example when $D$ is a ``strong $\beta$-set.''
	[Strong $\beta$-sets are a little more regular than
	the $s$-sets of \ocite{Besicovitch}.]
	We mention here only the following consequence of
	Theorem \ref{th:beta-set} and Example
	\ref{example:Cantor}: When $D$ is a strong
	$\beta$-set, $\P_p(\cup_{t\in D}\{\rho
	\stackrel{t}{\leftrightarrow}\infty\})>0$ if and only if
	there exists $\mu\in\mathcal{P}(\partial G)$ such that
	\begin{equation}
		\iint \frac{\mu(dv)\, \mu(dw)}{|v\wedge w|^\beta\ p^{|v\wedge w|}}
		<\infty.
	\end{equation}
	This implies both Lyons's theorem \eqref{eq:Lyons},
	and that of H\"aggstr\"om et al.\ \eqref{eq:HPS}. See remark
	\ref{rem:interpolate}.
\end{remark}

Next we follow the development of \ocite{HaggstromEtAl}*{Theorem 1.6},
and consider the Hausdorff
dimension of the set of times at which percolation
occurs. The matter is non-trivial only when $p=p_c$.

Consider the random subset $S(G):=S(G)(\omega)$ of $[0\,,1]$ defined
as
\begin{equation}
	S(G) := \left\{ t\ge 0:\ \rho\stackrel{t}{\leftrightarrow}
	\infty\right\}.
\end{equation}
Note in particular that, as events,
\begin{equation}
	\{ S(G)\cap D\neq\varnothing\}
	=\bigcup_{t\in D} \left\{ \rho\stackrel{t}{\leftrightarrow}
	\infty\right\}.
\end{equation}

Define, for all $\alpha\in(0\,,1)$, the function
$\phi(\alpha):(\partial G\times\R_+)^2 \to\R_+\cup\{\infty\}$,
as follows:
\begin{equation}\label{eq:phi}
	\phi(\alpha) \Big((v\,,t)\,;(w\,,s)\Big) := \frac{
	h((v\,,t)\,;(w\,,s))}{|t-s|^\alpha}.
\end{equation}
Then we offer the following result on the fine structure of $S(G)$.

\begin{theorem}\label{th:dim}
	Let $D$ be a non-random compact subset of $[0\,,1]$.
	If $\P_p\{\rho\leftrightarrow\infty\}>0$ then $S(G)$ has positive
	Lebesgue measure a.s.
	If $\P_p\{\rho\leftrightarrow\infty\}=0$, then $S(G)$ has
	zero Lebesgue measure a.s., and a.s.\ on 
	$\{S(G)\cap D\neq\varnothing\}$,
	\begin{equation}\begin{split}
		\dimh \left( S(G)\cap D\right)
		=  \sup\left\{ 0<\alpha<1:\
		\mathrm{Cap}_{\phi(\alpha)}(\partial G \times D)>0
		\right\},
	\end{split}\end{equation}
	where $\sup\varnothing:=0$.
\end{theorem}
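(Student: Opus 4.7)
My plan is to dispatch the Lebesgue-measure claim first and then prove matching upper and lower bounds on $\dimh(S(G)\cap D)$. The Lebesgue-measure statements come from stationarity: Fubini gives $\E|S(G)\cap[0\,,1]|=\P_p\{\rho\leftrightarrow\infty\}$, so $|S(G)|=0$ a.s.\ when this probability vanishes, and in the supercritical case a standard ergodicity argument (the time-update of the edge configurations is a mixing Markov flow) promotes positive expectation to a.s.\ positivity.

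For the \textbf{lower bound}, set $\alpha^*:=\sup\{\beta\in(0\,,1):\mathrm{Cap}_{\phi(\beta)}(\partial G\times D)>0\}$, fix $\alpha<\alpha^*$, and pick $\mu\in\mathcal{P}(\partial G\times D)$ with $I_{\phi(\alpha)}(\mu)<\infty$. Form the random measures on $D$
\[
\tau_n(B):=\iint \1_B(t)\,p^{-n}\1\{\rho\stackrel{t}{\leftrightarrow}v_n(\xi)\}\,\mu(d\xi\,dt),
\]
where $v_n(\xi)$ denotes the vertex of height $n$ on the ray $\xi$. Because each edge evolves as a stationary two-state Markov chain with $\P\{\text{open at both }s,t\}=p^2(1+(q/p)e^{-|s-t|})$, the key identity
\[
\E\bigl[p^{-|v|-|w|}\1\{\rho\stackrel{t}{\leftrightarrow}v\}\1\{\rho\stackrel{s}{\leftrightarrow}w\}\bigr]=h((v,t);(w,s))
\]
holds, giving $\E\tau_n(D)=1$ and $\E\iint|t-s|^{-\alpha}\tau_n(dt)\tau_n(ds)\le I_{\phi(\alpha)}(\mu)<\infty$. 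Standard tightness plus martingale arguments identify a subsequential weak limit $\tau$ as a random positive measure supported on $S(G)\cap D$; Paley--Zygmund yields $\P\{\tau(D)>0\}>0$, and on that event the $\alpha$-Riesz energy of $\tau$ is finite, so Frostman's lemma gives $\dimh(S(G)\cap D)\ge\alpha$ with positive probability. A zero-one law arising from the mixing time-dynamics upgrades this to an a.s.\ lower bound on $\{S(G)\cap D\neq\varnothing\}$.

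For the \textbf{upper bound}, let $\alpha>\alpha^*$, so $\mathrm{Cap}_{\phi(\alpha)}(\partial G\times D)=0$. Cover $D$ by $N_\delta$ disjoint closed intervals $J_1,\dots,J_{N_\delta}$ of common length $\delta$ and apply the upper half of Theorem \ref{th:main}:
\[
\E\mathcal{H}^\alpha_\delta(S(G)\cap D)\le \delta^\alpha\sum_{i=1}^{N_\delta}\P\{S(G)\cap J_i\neq\varnothing\}\le 512\,\delta^\alpha\sum_{i=1}^{N_\delta}\mathrm{Cap}_h(\partial G\times J_i).
\]
It remains to show the right-hand side tends to $0$ along some $\delta\to0$. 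For this one glues near-minimizers of $\mathrm{Cap}_h$ on each $J_i$ into a probability measure on $\partial G\times D$ whose $\phi(\alpha)$-energy is controlled by $\bigl[\delta^\alpha\sum_i\mathrm{Cap}_h(\partial G\times J_i)\bigr]^{-1}$; if the right-hand side above stayed bounded along some $\delta\to 0$, one would manufacture a probability measure of finite $\phi(\alpha)$-energy, contradicting $\mathrm{Cap}_{\phi(\alpha)}(\partial G\times D)=0$. Hence $\mathcal{H}^\alpha(S(G)\cap D)=0$ a.s., so $\dimh(S(G)\cap D)\le\alpha$ a.s.

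The \textbf{hardest step} is the gluing in the upper bound. For spherically symmetric $G$ the minimizer of $\mathrm{Cap}_h$ factors as the uniform measure on $\partial G$ times a measure on the time interval (the Peres--Steif setting), and the comparison with $\mathrm{Cap}_{\phi(\alpha)}$ is immediate; no such factorization is available on a general tree. One must instead use resolvent estimates for the two-state edge chain---the L\'evy-process input highlighted in the introduction---to exchange the $\delta^\alpha$ prefactor against the $|t-s|^{-\alpha}$ density in $\phi(\alpha)$. A subsidiary technical point is the ergodicity-based zero-one law that upgrades the positive-probability lower bound to one valid almost surely on $\{S(G)\cap D\neq\varnothing\}$.
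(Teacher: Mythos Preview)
Your upper-bound gluing has a real gap. Knowing only $I_h(\mu_i)\le 2[\mathrm{Cap}_h(\partial G\times J_i)]^{-1}$ for a near-minimizer $\mu_i$ on $\partial G\times J_i$ gives no control over the diagonal block $\iint |t-s|^{-\alpha}h\,d\mu_i\,d\mu_i$: the factor $|t-s|^{-\alpha}$ blows up on $\{t=s\}$, and nothing prevents $\mu_i$ from concentrating there. So the glued measure need not have finite $\phi(\alpha)$-energy, and your contradiction does not close. The ``resolvent estimates for the two-state edge chain'' you invoke do not help here; the L\'evy-process input in this paper is not about the edge chain at all. A second gap is the zero--one upgrade in your lower bound: for a fixed compact $D$ the event $\{\dimh(S(G)\cap D)\ge\alpha\}$ is not time-shift invariant, so mixing of the dynamics does not furnish a zero--one law, and it is not obviously a tail event in the edge variables either (closing one edge near $\rho$ can excise an entire subtree's contribution to $S(G)$).

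The paper bypasses both problems at once by introducing an \emph{independent} symmetric stable process $Y_\alpha$ of index $1-\alpha$ and proving (Theorem~\ref{th:hit:Y}) the two-sided estimate $\P_p\{S(G)\cap D\cap\overline{Y_\alpha([1,2])}\neq\varnothing\}\asymp\mathrm{Cap}_{\phi(\alpha)}(\partial G\times D)$, via the same Cairoli maximal-inequality machinery as in Theorem~\ref{th:main} but now in a three-parameter filtration. The singular factor $|t-s|^{-\alpha}$ enters naturally as the one-potential density of $Y_\alpha$, so no gluing across scales is needed; and Borel--Cantelli applied to i.i.d.\ copies of the stable range, combined with McKean's hitting criterion and Frostman's theorem, yields $\P_p\{\mathrm{Cap}_\alpha(S(G)\cap D)>0\}\in\{0,1\}$ directly, with no separate zero--one argument required.
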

When $G$ is spherically symmetric
this theorem can be simplified considerably; see 
Proposition \ref{pr:dimh:SS} below. In the case
that $G$ is spherically symmetric and $D=[0\,,1]$,
Theorem \ref{th:dim} is a consequence of Theorem 1.5 of
\ocite{PeresSteif}.

\section{\bf Proof of Theorem \ref{th:main}}
We prove only the upper bound; the lower bound \eqref{eq:LB}
was proved much earlier in \ocite{PeresSteif}. 
 
Without loss of generality we may assume that
$G$ has no leaves. Otherwise we can replace
$G$ everywhere by $G'$, where the latter
is the maximal subtree of $G$ that has no leaves.
This ``leaflessness'' assumption is in force throughout
the proof. 
Also, without loss of generality, we may assume that
$\P_p(\cup_{t\in D}\{\rho\stackrel{t}{\leftrightarrow}
\infty\} ) >0$, for there is nothing left
to prove otherwise.
 
As in \ocite{PeresSteif}, we first derive the theorem in
the case that $G$ is a finite tree. Let $n$ denote its
\emph{height}. That is, $n:=\max |v|$ where the maximum
is taken over all vertices $v$. We can---and will---assume
without loss of generality that $G$ has \emph{no leaves}.
That is, whenever a vertex $v$
satisfies $|v|<n$, then $v$ necessarily has a descendant
$w$ with $|w|=n$.

Define
\begin{equation}
	\Xi := \partial G\times D.
\end{equation}
Let $\mu\in\mathcal{P}(\Xi)$, and define
\begin{equation}\label{eq:Z}
	Z(\mu) := \frac{1}{p^n} \int_{(v,t)\in\Xi} \1_{\{
	\rho\stackrel{t}{\leftrightarrow} v\}}
	\,\mu(dv\, dt).
\end{equation}
During the course of their derivation of \eqref{eq:LB},
\ocite{PeresSteif} have demonstrated  that
\begin{equation}\label{eq:moments}
	\E_p[Z(\mu)]=1\quad\text{and}\quad
	\E_p\left[Z^2(\mu)\right] \le 2I_h(\mu).
\end{equation}
Equation \eqref{eq:LB} follows immediately from this
and the Paley--Zygmund inequality \ycite{PaleyZygmund}:
\emph{For all non-negative $f\in L^2(\P_p)$,}
\begin{equation}\label{eq:PZ}
	\P_p\{ f >0\} \ge \frac{ \left( \E_p f \right)^2}{
	\E_p[f^2]}.
\end{equation}

Now we prove the second half of Theorem \ref{th:main}.
Because $G$ is assumed to be finite, we can embed it
in the plane. We continue to write $G$ for the said embedding
of $G$ in $\R^2$; this should not cause too much confusion
since we will not refer to the abstract tree $G$ until the end
of the proof.

Since $G$ is assumed to be leafless,
we can identify $\partial G$ with the collection
of vertices $\{v:\, |v|=n\}$ of maximal length. [Recall
that $n$ denotes the height of $G$.] 

There are four natural partial orders on $\partial G\times\R_+$
which we describe next. Let $(v\,,t)$ and $(w\,,s)$ be two elements
of $\partial G\times\R_+$:
\begin{enumerate}
	\item We say that $(v\,,t)<_{_{(-,-)}} (w\,,s)$
		if $t\le s$ and $v$ lies to the left of $w$ in the planar embedding
		of $G$.
	\item If $t\ge s$ and $v$ lies to the left of $w$ [in the planar
		embedding of $G$], then we say that
		$(v\,,t)<_{_{(-,+)}}(w\,,s)$.
	\item If $t\le s$ and $v$ lies to the right of $w$, then we say that
		$(v\,,t)<_{_{(+,-)}} (w\,,s)$.
	\item If $t\ge s$ and $v$ lies to the right of $w$, then we
		say that $(v\,,t)<_{_{(+,+)}}(w\,,s)$.
\end{enumerate}
One really only needs two of these, but having four
simplifies the ensuing presentations slightly. 

The key feature
of these partial orders is that, together, they totally order
$\partial G\times\R_+$. By this we mean that
\begin{equation}\label{eq:TO}
	(v\,,t),(w\,,s)\in\partial G\times\R_+
	\ \Rightarrow\ {}^\exists \sigma,\tau \in\{-\,,+\}:\
	(v\,,t) <_{_{(\sigma,\tau)}} (w\,,s).
\end{equation}

Define, for all $(v\,,t)\in\partial G\times\R_+$ and
$\sigma,\tau\in\{-\,,+\}$,
\begin{equation}
	\mathcal{F}_{_{(\sigma,\tau)}} (v\,,t) := \text{sigma-algebra
	generated by } \left\{ \1_{\{
	\rho\stackrel{s}{\leftrightarrow}w
	\}}\,;\, (w\,,s) <_{_{(\sigma,\tau)}} (v\,,t) \right\},
\end{equation}
where the conditions
that $s\ge 0$ and $w\in\partial G$ are
implied tacitly. It is manifestly
true that for every fixed $\sigma,\tau\in\{-\,,+\}$,
the collection of sigma-algebras
$\mathcal{F}_{_{(\sigma,\tau)}}:=\{\mathcal{F}_{_{(\sigma,\tau)}}
(v\,,t)\}_{t\ge 0,
v\in\partial G}$ is a filtration in the sense that
\begin{equation}
	(w\,,s)<_{_{(\sigma,\tau)}}
	(v\,,t) \ \Longrightarrow\
	\mathcal{F}_{_{(\sigma,\tau)}}
	(w\,,s)\subseteq \mathcal{F}_{_{(\sigma,\tau)}}(v\,,t).
\end{equation}
Also, it follows fairly readily that each $\mathcal{F}_{_{(\sigma,\tau)}}$
is commuting in the sense of
\ocite{Khoshnevisan}*{pp.\ 35 and 233}. When
$(\sigma\,,\tau)=(\pm\,,+)$ this assertion is easy enough
to check directly; when $(\sigma\,,\tau)=(\pm\,,-)$, it follows
from the time-reversability of our dynamics together with the
case $\tau=+$.
Without 
causing too much confusion we can replace
$\mathcal{F}_{_{(\sigma,\tau)}}(v\,,t)$ by its completion
[$\P_p$]. Also,
we may---and will---replace the latter further by
making it right-continuous in the partial order 
$<_{(\sigma,\tau)}$. 
As a consequence of this and Cairoli's maximal inequality
\cite{Khoshnevisan}*{Theorem 2.3.2, p.\ 235}, 
for all twice-integrable random variables
$Y$, and all $\sigma,\tau\in\{-\,,+\}$,
\begin{equation}\label{eq:Cairoli1}
	\E_p\left(
	\sup_{(v,t)\in\Xi}\left| 
	\E_p \left[ Y \,\left|\, \mathcal{F}_{_{(\sigma,\tau)}}
	(v\,,t)
	\right.\right] \right|^2 \right) \le 16 \E_p\left[
	Y^2\right].
\end{equation}
In order to obtain this from Theorem 2.3.2 of Khoshnevisan
(\emph{loc.\ cit.}) set $N=p=2$, identify the parameter 
$t$ in that book by our $(v\,,t)$, and define
the process $M$ there---at time-point $(v\,,t)$---to be 
our $\E_p[Y\,|\, \mathcal{F}_{_{\sigma,\tau}}(v\,,t)]$.

Next we bound from below $\E_p[Z(\mu)\,|\,\mathcal{F}_{_{(\sigma,\tau)}}
(w\,,s)]$,
where $s\ge 0$ and $w\in\partial G$ are fixed:
\begin{equation}\begin{split}
	&\E_p\left[ Z(\mu)\,\left|\, \mathcal{F}_{_{(\sigma,\tau)}}
		(w\,,s) \right.\right]\\
	& \hskip1in\ge  \int_{
		\begin{subarray}{l}
			(v,t)\in\Xi:\\
			(w,s)<_{(\sigma,\tau)}(v,t)
		\end{subarray}}
		p^{-n}\
		\P_p\left( \left. \rho \stackrel{t}{\leftrightarrow}
		v\,\right|\, \mathcal{F}_{_{(\sigma,\tau)}}(w\,,s)\right)\,
		\mu(dv\, dt) \cdot \1_{\{ \rho\stackrel{s}{\leftrightarrow}
		w \}}.
\end{split}\end{equation}
By the Markov property, $\P_p$-a.s.\ on
$\{\rho\stackrel{s}{\leftrightarrow} w \}$,
\begin{equation}\label{eq:condcond}
	\P_p\left( \left. \rho \stackrel{t}{\leftrightarrow}
	v\,\right|\, \mathcal{F}_{_{(\sigma,\tau)}}(w\,,s)\right)
	= p^{n-|v\wedge w|}
	\left( p+q e^{-|t-s|}\right)^{|v\wedge w|}.
\end{equation}
See equation (6) of H\"aggstr\"om, Peres,
and Steif \ycite{HaggstromEtAl}. It follows then that
$\P_p$ a.s.,
\begin{equation}\label{eq:key}
	\E_p\left[ Z(\mu)\,\left|\, \mathcal{F}_{_{(\sigma,\tau)}}
	(w\,,s) \right.\right]
	\ge \int_{
	\begin{subarray}{l}
		(v,t)\in\Xi:\\
		(w,s)<_{(\sigma,\tau)}(v,t)
	\end{subarray}}
	h\left( (w\,,s)
	\,; (v\,,t)\right) \,
	\mu(dv\, dt)  \cdot \1_{\{ \rho\stackrel{s}{\leftrightarrow}
	w \}}.
\end{equation}

Thanks to the preceding, and \eqref{eq:TO},
for all $s\ge 0$ and $w\in\partial G$
the following holds $\P_p$ a.s.:
\begin{equation}\label{eq:key2}
	\sum_{\sigma,\tau\in\{-,+\}}
	\E_p\left[ Z(\mu)\,\left|\, \mathcal{F}_{_{(
	\sigma,\tau)}} (w\,,s) \right.\right]
	\ge \int_\Xi h\left( (w\,,s)
	\,; (v\,,t)\right) \,
	\mu(dv\, dt) \cdot \1_{\{ \rho\stackrel{s}{\leftrightarrow}
	w \}}.
\end{equation}
It is possible to check that the right-hand side
is a right-continuous function of $s$. Because $\partial G$
is finite, we can therefore combine all null sets and deduce that
$\P_p$ almost surely, \eqref{eq:key2} holds simultaneously
for all $s\ge 0$ and $w\in\partial G$.

Recall that we assumed, at the onset of the proof,
that $\P_p(\cup_{t\in D}
\{\rho\stackrel{t}{\leftrightarrow}\infty\})>0$. From this it follows easily that
we can find random variables $\mathbf{s}$ and $\mathbf{w}$ such that:
\begin{enumerate}
	\item $\mathbf{s}(\omega)\in D\cup\{\infty\}$ for all $\omega$,
		where $\infty$ is a point not in $\R_+$;
	\item $\mathbf{w}(\omega)\in\partial G\cup
		\{\delta\}$, where $\delta$ is an abstract
		``cemetery'' point not in $\partial G$;
	\item $(\mathbf{w}(\omega)\,,\mathbf{s}(\omega))\neq (\delta\,,\infty)$
		if and only if there exists $t\in D$ and $v\in\partial G$
		such that $\rho\stackrel{t}{\leftrightarrow}v$;
	\item $(\mathbf{w}(\omega)\,,\mathbf{s}(\omega))\neq (\delta\,,\infty)$
		if and only if $\rho\stackrel{\mathbf{s}(\omega)}{\leftrightarrow}
		\mathbf{w}(\omega)$.
\end{enumerate}
Here, and henceforth, 
$\{\rho\stackrel{t}{\leftrightarrow}v\}$ denotes the event
that at time $t$ every edge that conjoins $\rho$ and $v$ is open.

There are many ways of constructing $\mathbf{s}$ and $\mathbf{w}$.
We explain one for the sake of completeness.
Define
\begin{equation}
	\mathbf{s} := \inf\left\{ s\in D:\
	{}^\exists v\in\partial G\text{ such that }
	\rho\stackrel{s}{\leftrightarrow}v\right\},
\end{equation}
where $\inf\varnothing:=\infty$. If $\mathbf{s}(\omega)=\infty$
then we set $\mathbf{w}(\omega):=\delta$. Else, we define
$\mathbf{w}(\omega)$ to be the left-most (say) ray in 
$\partial G$ whose edges are all open at time $\mathbf{s}(\omega)$.
It might help to recall that $G$ is assumed to be a finite
tree, and we are identifying it with its planar embedding
so that ``left-most'' can be interpreted unambiguously.

Define a measure $\mu$ on $\Xi$ by letting, for all
Borel sets $A\times B\subseteq\Xi$,
\begin{equation}
	\mu(A\times B) := \P_p\left( \left. (\mathbf{w} \,,\mathbf{s})
	\in A\times B\,
	\right|\, (\mathbf{w}\,,\mathbf{s})\neq(\delta\,,\infty) \right).
\end{equation}
Note that $\mu\in\mathcal{P}(\Xi)$ because
$\P_p(\cup_{t\in D}\{\rho\stackrel{t}{\leftrightarrow}\infty\})
=\P_p\{ (\mathbf{w}\,,\mathbf{s})\neq(\delta\,,\infty)\}>0$.

We apply \eqref{eq:key2} with this particular $\mu\in\mathcal{P}(\Xi)$,
and replace $(w\,,s)$ by $(\mathbf{w}\,,\mathbf{s})$, to find that a.s.,
\begin{equation}\label{eq:key3}\begin{split}
	&\sum_{\sigma,\tau\in\{-,+\}}\sup_{(w,s)\in\Xi}
		\E_p\left[ Z(\mu)\,\left|\, \mathcal{F}_{_{(
		\sigma,\tau)}} (w\,,s) \right.\right]\\
	& \hskip2.4in \ge \int_\Xi h\left( (\mathbf{w}\,,\mathbf{s})
		\,; (v\,,t)\right) \,
		\mu(dv\, dt)  \cdot \1_{\cup_{t\in D}\{\rho
		\stackrel{t}{\leftrightarrow} \infty\}}.
\end{split}\end{equation}
According to \eqref{eq:Cairoli1},
and thanks to the inequality,
\begin{equation}\label{eq:CS}
	(a+b+c+d)^2\le 4(a^2+b^2+c^2+d^2),
\end{equation}
we can deduce that
\begin{equation}\label{eq:key4}\begin{split}
	&\E_p\left[\left(\sum_{\sigma,\tau\in\{-,+\}}\sup_{(w,s)\in\Xi}
		\E_p\left[ Z(\mu)\,\left|\, \mathcal{F}_{_{(
		\sigma,\tau)}} (w\,,s) \right.\right]
		\right)^2\right]\\
	& \hskip2.2in\le 4\sum_{\sigma,\tau\in\{-,+\}}
		\E_p\left[
		\sup_{(w,s)\in\Xi} \left|
		\E_p\left[ Z(\mu)\,\left|\, \mathcal{F}_{_{(
		\sigma,\tau)}} (w\,,s) \right.\right]
		\right|^2\right] \\
	& \hskip2.2in\le 256 \E_p\left[ Z^2(\mu)\right]\\
	& \hskip2.2in\le 512 I_h(\mu).
\end{split}\end{equation}
See \eqref{eq:moments} for the final inequality.
On the other hand, thanks to the definition of $\mu$,
and by  the Cauchy--Schwarz inequality,
\begin{equation}\label{eq:key5}\begin{split}
	&\E_p\left[\left. \left(
		\int_\Xi h\left( (\mathbf{w}\,,\mathbf{s})
		\,; (v\,,t)\right) \,
		\mu(dv\, dt)\right)^2 \,\right|\, \bigcup_{t\in D}
		\left\{ \rho \stackrel{t}{\leftrightarrow}\infty
		\right\}\right]\\
	& \hskip1.8in\ge  \left(\E_p\left[\left.
		\int_\Xi h\left( (\mathbf{w}\,,\mathbf{s})
		\,; (v\,,t)\right) \,
		\mu(dv\, dt)\,\right|\,\bigcup_{t\in D}
		\left\{ \rho \stackrel{t}{\leftrightarrow}\infty
		\right\}\right]\right)^2\\
	& \hskip1.8in = \left[ I_h(\mu)\right]^2.
\end{split}\end{equation}
Because $G$ is finite, it follows that 
$0<I_h(\mu)<\infty$.
Therefore, \eqref{eq:key3}, \eqref{eq:key4}, and \eqref{eq:key5}
together imply the theorem in the case that $G$ is finite.
The general case follows from the preceding by monotonicity.

\section{\bf Proof of Theorem \ref{th:dim}}
The assertions about the Lebesgue measure of
$S(G)$ are mere consequences of the fact
that $\P_p\{\rho\stackrel{t}{\leftrightarrow}\infty\}$
does not depend on $t$, used in conjunction with the
Fubini--Tonelli theorem. Indeed,
\begin{equation}
	\E_p\left[ \text{meas } S(G) \right]=
	\int_0^\infty \P_p\left\{ \rho\stackrel{t}{\leftrightarrow}
	\infty\right\}\, dt.
\end{equation}
Next we proceed with the remainder
of the proof.

Choose and fix $\alpha\in(0\,,1)$.
Let $Y_\alpha := \{Y_\alpha (t)\}_{t\ge 0}$ to be
a symmetric stable L\'evy process on $\R$ with
index $(1-\alpha)$. We can normalize $Y_\alpha$
so that $\E[\exp\{i\xi Y(1)\}]=\exp(-|\xi|^{1-\alpha})$
for all $\xi\in\R$.
We assume also that $Y_\alpha$ is
independent of our dynamical
percolation process. For more information on the process
$Y_\alpha$ see the monographs of \ocite{Bertoin},
\ocite{Khoshnevisan}, and \ocite{Sato}. 

Recall the function $\phi(\alpha)$ from \eqref{eq:phi}.
Our immediate goal
is to demonstrate the following.

\begin{theorem}\label{th:hit:Y}
	Suppose $\P_p\{\rho\leftrightarrow\infty\}=0$.
	Choose and fix $M>1$. Then there exists
	a finite constant $A=A(M)>1$ such that
	for all compact sets $D\subseteq [-M\,,M]$,
	\begin{equation}
		\frac 1A \mathrm{Cap}_{\phi(\alpha)} (\partial G\times D) \le 
		\P_p\left\{ S(G)\cap D\cap \overline{Y_\alpha([1\,,2])}
		\neq\varnothing\right\} \le A \mathrm{Cap}_{\phi(\alpha)}
		(\partial G\times D),
	\end{equation}
	where $\overline{\rm U}$ denotes the closure of $\rm U$.
	The condition that $\P_p\{\rho\leftrightarrow\infty\}=0$
	is not needed in the upper bound.
\end{theorem}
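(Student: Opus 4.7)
The plan is to mirror the proof of Theorem \ref{th:main}: a Paley--Zygmund second-moment bound for the lower inequality, and a Cairoli-maximal-inequality/first-hit construction for the upper one. Throughout, the new factor $|t-s|^{-\alpha}$ arises from the $1$-potential of the symmetric $(1-\alpha)$-stable process $Y_\alpha$, which on compact subsets of $\R$ is comparable to $|\cdot|^{-\alpha}$. Concretely, introduce the approximate occupation functional
\[
	L_\varepsilon(t) := \frac{1}{2\varepsilon}\int_1^2 \1_{\{|Y_\alpha(u)-t|\le\varepsilon\}}\,du,
\]
for which the estimates
\[
	\E[L_\varepsilon(t)]\in[c_1(M), c_2(M)]
	\quad\text{and}\quad
	\E[L_\varepsilon(t)L_\varepsilon(s)] \le A_0(M)\,|t-s|^{-\alpha}
\]
hold uniformly in $\varepsilon\in(0,1]$ and $s,t\in[-M,M]$. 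Both follow from the stable-density bound $p_u(y)\le C\min(u^{-1/(1-\alpha)}, u|y|^{-(2-\alpha)})$, together with a Markov decomposition at $\min(u,u')$.

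For the lower bound, for $\mu\in\mathcal{P}(\partial G\times D)$ put
\[
	Z_\varepsilon(\mu) := \frac{1}{p^n}\int_{\partial G\times D} \1_{\{\rho\stackrel{t}{\leftrightarrow}v\}}\,L_\varepsilon(t)\,\mu(dv\,dt).
\]
Independence of $Y_\alpha$ from the percolation, combined with the identity $\P_p\{\rho\stackrel{t}{\leftrightarrow}v,\rho\stackrel{s}{\leftrightarrow}w\}=p^{2n}h((v,t);(w,s))$ (used around \eqref{eq:condcond} and \eqref{eq:moments}), gives $\E_p[Z_\varepsilon(\mu)]\in[c_1,c_2]$ and $\E_p[Z_\varepsilon(\mu)^2]\le A(M)\,I_{\phi(\alpha)}(\mu)$. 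The Paley--Zygmund inequality \eqref{eq:PZ} then yields $\P_p\{Z_\varepsilon(\mu)>0\}\ge c(M)/I_{\phi(\alpha)}(\mu)$. Since $\{Z_\varepsilon(\mu)>0\}$ forces some $(v,t)\in\mathrm{supp}(\mu)$ to satisfy $\rho\stackrel{t}{\leftrightarrow}v$ and $\mathrm{dist}(t,Y_\alpha([1,2]))\le\varepsilon$, letting $\varepsilon\downarrow 0$---by compactness of $\partial G\times D$ and right-continuity of the percolation configuration in $t$---delivers the lower bound, and infimizing over $\mu$ closes the estimate.

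For the upper bound, imitate Theorem \ref{th:main} with $Z_\varepsilon(\mu)$ in place of $Z(\mu)$, enlarging each filtration $\mathcal{F}_{(\sigma,\tau)}(v,t)$ by the independent $\sigma$-algebra of $Y_\alpha$. Since $Y_\alpha$ is independent of the percolation, the enlarged filtration remains commuting and right-continuous, so Cairoli's $L^2$-maximal inequality \eqref{eq:Cairoli1} still applies. The conditional lower bound \eqref{eq:key} picks up an extra factor $L_\varepsilon(t)$ inside the integral. Choose the first-hit pair $(\mathbf{w},\mathbf{s})$ in $\partial G\times(D\cap\overline{Y_\alpha([1,2])})$, and let $\mu$ be the conditional law of $(\mathbf{w},\mathbf{s})$ given the event $\{S(G)\cap D\cap\overline{Y_\alpha([1,2])}\neq\varnothing\}$. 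The chain \eqref{eq:key3}--\eqref{eq:key5}, after $\varepsilon\downarrow 0$, then produces
\[
	\P_p\bigl(S(G)\cap D\cap\overline{Y_\alpha([1,2])}\neq\varnothing\bigr) \le \frac{A(M)}{I_{\phi(\alpha)}(\mu)} \le A(M)\,\mathrm{Cap}_{\phi(\alpha)}(\partial G\times D).
\]

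The principal obstacle is twofold. First, verifying the uniform potential estimate $\E[L_\varepsilon(t)L_\varepsilon(s)]\le A_0(M)|t-s|^{-\alpha}$, which is a standard but fiddly stable-density computation. Second, justifying the $\varepsilon\downarrow 0$ limit in the upper bound: one must show that $\E[\int h((\mathbf{w},\mathbf{s});(v,t))\,L_\varepsilon(t)\,\mu(dv\,dt) \mid \text{event}]$ converges to something comparable to $I_{\phi(\alpha)}(\mu)$. This requires invoking the strong Markov property of $Y_\alpha$ at the first approach of its range to $\mathbf{s}$, in order to produce the factor $|t-\mathbf{s}|^{-\alpha}$ that completes the conversion of $h$ into $\phi(\alpha)$.
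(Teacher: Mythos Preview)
Your lower-bound argument is correct and essentially identical to the paper's.

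Your upper-bound argument, however, has a genuine gap at exactly the point you flag as the ``principal obstacle,'' and the fix you propose does not work as stated. If you adjoin the \emph{entire} $\sigma$-algebra of $Y_\alpha$ to each $\mathcal{F}_{(\sigma,\tau)}(v,t)$, then $L_\varepsilon(t)$ is measurable with respect to the enlarged filtration and passes through the conditional expectation unchanged. The lower bound on $\E_p[Z_\varepsilon(\mu)\mid\cdot]$ evaluated at $(\mathbf{w},\mathbf{s})$ then contains the \emph{random} factor $L_\varepsilon(t)$ rather than a deterministic $|t-\mathbf{s}|^{-\alpha}$. To recover the latter you would need, roughly, $\E[L_\varepsilon(t)\mid(\mathbf{w},\mathbf{s}),\text{event}]\gtrsim|t-\mathbf{s}|^{-\alpha}$. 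But $\mathbf{s}$ is an arbitrary point of $S(G)\cap D\cap\overline{Y_\alpha([1,2])}$; it is not the value of $Y_\alpha$ at any identifiable time, so there is no stopping time at which to apply the strong Markov property. The phrase ``first approach of its range to $\mathbf{s}$'' is circular: $\mathbf{s}$ is itself determined by the full path of $Y_\alpha$ together with the percolation.

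The paper resolves this by treating the running time of $Y_\alpha$ as a genuine \emph{third} filtration parameter, forming a three-parameter commuting filtration $\mathcal{G}_{(\sigma,\tau)}(v,t,r):=\mathcal{F}_{(\sigma,\tau)}(v,t)\times\mathcal{Y}(r)$ and applying Cairoli with constant $64$. One then selects a \emph{triple} $(\mathbf{w}_\varepsilon,\mathbf{s}_\varepsilon,\mathbf{u}_\varepsilon)$, where $\mathbf{u}_\varepsilon\in[1,2]$ is a time with $|Y_\alpha(\mathbf{u}_\varepsilon)-\mathbf{s}_\varepsilon|\le\varepsilon/2$. Conditioning at $(w,s,u)$ and using the \emph{ordinary} Markov property of $Y_\alpha$ forward from time $u$ gives, on $\{|Y_\alpha(u)-s|\le\varepsilon/2\}$, a deterministic lower bound $\gtrsim\phi_\varepsilon(\alpha)((v,t);(w,s))$ with $\phi_\varepsilon(\alpha)=h\cdot\min(|t-s|^{-1},\varepsilon^{-1})^\alpha$. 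The rest of the argument then follows the template of Theorem~\ref{th:main}, with $\mu_\varepsilon$ the conditional law of $(\mathbf{w}_\varepsilon,\mathbf{s}_\varepsilon)$ and a weak-limit/monotone-convergence step to pass from $\phi_\varepsilon$ to $\phi$ as $\varepsilon\downarrow0$. The extra parameter $\mathbf{u}_\varepsilon$ is precisely what supplies the missing stopping time your sketch lacks.
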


\begin{remark}
	The time interval $[1\,,2]$ could just as easily
	be replaced with an arbitrary, but fixed, closed interval 
	$I\subset(0\,,\infty)$ that is
	bounded away from the origin.
\end{remark}

\begin{remark}\label{rem:hit:Y}
	We do not require the following strengthened
	form of Theorem \ref{th:hit:Y}, but it is
	simple enough to derive that we 
	describe it here: \emph{Theorem \ref{th:hit:Y}
	continues to holds if
	$\overline{Y_\alpha([1\,,2])}$ 
	were replaced by $Y_\alpha([1\,,2])$.}
	Indeed, a well-known theorem of \ocite{Kanda}
	implies that all semipolar sets for $Y_\alpha$
	are polar; i.e., $Y_\alpha$ satisfies Hunt's
	(H) hypothesis
	\cites{Hunt1,Hunt3}. 
	This readily implies  the assertion of this Remark. 
\end{remark}

From here on, until after the completion of
the proof of Theorem \ref{th:hit:Y}, 
we will assume without loss of much generality that 
$G$ is a finite tree of height $n$. The extension
to the case where $G$ is infinite is made by 
standard arguments.

Let $D$ be as in Theorem \ref{th:hit:Y}.
For all $\mu\in\mathcal{P}(\partial G\times D)$
and $\e\in(0\,,1)$ define
\begin{equation}
	Z_\e(\mu) :=  \frac{1}{(2\e)p^n}
	\int_\Xi \int_1^2 \1_{\{
	\rho\stackrel{t}{\leftrightarrow} v\}\cap
	\{ |Y_\alpha (r)-t|\le \e\}}\, dr
	\,\mu(dv\, dt),
\end{equation}
where $\Xi:=\partial G\times D$, as before.

Next
we collect some of the 
elementary properties of $Z_\e(\mu)$.

\begin{lemma}\label{lem:Zeps:moments}
	There exists $c>1$ such that
	for all $\mu\in\mathcal{P}(\Xi)$ and $\e\in(0\,,1)$:
	\begin{enumerate}
		\item $\E_p[Z_\e(\mu)]\ge 1/c$; and
		\item $\E_p[Z^2_\e(\mu)]\le c I_{\phi (\alpha)} (\mu)$.
	\end{enumerate}
\end{lemma}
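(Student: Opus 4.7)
The plan is to compute both moments by Fubini and the independence of $Y_\alpha$ from the dynamical percolation, reducing each claim to a separate probabilistic statement about percolation and an analytic statement about the stable density. For part (1), the single-path probability is $\P_p\{\rho\stackrel{t}{\leftrightarrow} v\}=p^n$ for $v\in\partial G$ (since $|v|=n$), exactly cancelling the $p^{-n}$ in $Z_\e(\mu)$, so that
\begin{equation*}
	\E_p[Z_\e(\mu)] = \frac{1}{2\e}\int_\Xi \int_1^2 \P\{|Y_\alpha(r)-t|\le\e\}\,dr\,\mu(dv\,dt).
\end{equation*}
Because the density of $Y_\alpha(r)$ is continuous and strictly positive, stable scaling ensures it is bounded below uniformly on the compact set $[1,2]\times[-M-1,M+1]$; hence $\P\{|Y_\alpha(r)-t|\le\e\}\ge c_0\e$ for $r\in[1,2]$ and $t\in D$, which gives part (1).

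For part (2) I would invoke the H\"aggstr\"om--Peres--Steif two-time identity (cf.~\eqref{eq:condcond} with one end reversed),
\begin{equation*}
	\P_p\{\rho\stackrel{t}{\leftrightarrow} v,\ \rho\stackrel{s}{\leftrightarrow} w\} = p^{2n}\,h\bigl((v,t);(w,s)\bigr),
\end{equation*}
so that after cancellation of $p^{2n}$
\begin{equation*}
	\E_p[Z_\e^2(\mu)] = \iint_{\Xi^2} h\bigl((v,t);(w,s)\bigr)\,\frac{K_\e(s,t)}{(2\e)^2}\,\mu(dv\,dt)\,\mu(dw\,ds),
\end{equation*}
with $K_\e(s,t):=\int_1^2\int_1^2 \P\{|Y_\alpha(r)-t|\le\e,\,|Y_\alpha(r')-s|\le\e\}\,dr\,dr'$. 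The task thus reduces to showing the purely analytic bound $K_\e(s,t)/(2\e)^2\le C|s-t|^{-\alpha}$ uniformly in $\e$, after which the definition \eqref{eq:phi} of $\phi(\alpha)$ identifies the right-hand side as $\le c\,I_{\phi(\alpha)}(\mu)$. Conditioning on the smaller of $r,r'$ via the Markov property expresses the inner probability as a convolution involving the transition density $p_u$, and the standard pointwise bound for the symmetric stable density of index $1-\alpha$,
\begin{equation*}
	p_u(y) \le C\,\min\!\Bigl(u^{-1/(1-\alpha)},\ u\,|y|^{-(2-\alpha)}\Bigr),
\end{equation*}
reduces both inner spatial integrations to a factor $\e^2\min\bigl((r'-r)^{-1/(1-\alpha)},(r'-r)|s-t|^{-(2-\alpha)}\bigr)$.

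The main obstacle is this last min-integration in $u:=r'-r$. The correct cross-over is $u_0=|s-t|^{1-\alpha}$; splitting the integral at $u_0$, the small-$u$ piece gives a term of order $|s-t|^{\beta-1}=|s-t|^{-\alpha}$ (with $\beta:=1-\alpha$), and the large-$u$ piece is integrable precisely because $1/\beta>1$, contributing the same order $|s-t|^{-\alpha}$. Integrating over $r\in[1,2]$ only costs a harmless constant, which yields $K_\e(s,t)\le C\e^2|s-t|^{-\alpha}$ and hence $\E_p[Z_\e^2(\mu)]\le c\,I_{\phi(\alpha)}(\mu)$. The compactness of $D\subseteq[-M,M]$ and the separation of $[1,2]$ from $0$ keep all constants dependent only on $M$ and $\alpha$, as required.
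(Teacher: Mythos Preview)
Your proposal is correct in outline and establishes both parts of the lemma. Part (1) is handled exactly as in the paper: the stable density is bounded below on $[1,2]\times[-M-1,M+1]$ (the paper invokes Bochner subordination, you invoke continuity plus scaling---either works). For part (2) you take a genuinely different analytic route. After the Markov-property splitting and the identity $\P_p\{\rho\stackrel{t}{\leftrightarrow}v,\,\rho\stackrel{s}{\leftrightarrow}w\}=p^{2n}h((v,t);(w,s))$---which you correctly identify---the paper packages the $u=r'-r$ integration into the one-potential density $u(z)=\int_0^\infty p_r(z)e^{-r}\,dr$ and then appeals to the known asymptotic $u(z)\asymp |z|^{-\alpha}$. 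You instead use the pointwise stable bound $p_u(y)\le C\min(u^{-1/(1-\alpha)},u|y|^{-(2-\alpha)})$ and integrate by hand, splitting at $u_0=|s-t|^{1-\alpha}$. Your approach is more elementary (no potential theory needed); the paper's is shorter and yields the sharper intermediate estimate $K_\e(s,t)/(2\e)^2\le C\min(|s-t|^{-1},\e^{-1})^\alpha$, which it actually needs later for the $\phi_\e(\alpha)$-variant in \eqref{EW2}.

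One small gap: your density bound $p_u(z)\le Cu|z|^{-(2-\alpha)}$ is only useful when $|z|\gtrsim\e$, so the claimed reduction to $\e^2\min(u^{-1/\beta},u|s-t|^{-(2-\alpha)})$ is not justified on the near-diagonal region $|s-t|\le 4\e$ (there $z$ ranges over a neighborhood of $0$). The paper handles this by an explicit case split. In your framework, simply replace the density estimate there by the trivial bound $\P\{|Y_\alpha(u)-(s-t)|\le 2\e\}\le\min(1,C\e u^{-1/\beta})$; integrating this over $u\in(0,1]$ gives $O(\e^{1-\alpha})$, hence $K_\e\le C\e^{2-\alpha}\le C'\e^2|s-t|^{-\alpha}$ when $|s-t|\le 4\e$, and your argument closes.
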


\begin{proof}
	Define $p_r(a)$ to be the density of $Y_\alpha (r)$ at $a$. 
	Bochner's \emph{subordination} implies that: (i) $p_r(a)>0$ for all $r>0$
	and $a\in\R$; and (ii) there exists $c_1>0$
	such that $p_r(a) \ge c_1$
	uniformly for all $r\in[1\,,2]$ and $a\in[-M-1\,,M+1]$. We recall
	here that subordination is the assertion
	that $p_r(a)$ is a stable mixture
	of Gaussian densities \cite{Bochner}. 
	For related results, see also \ocite{Nelson}.
	\ocite{Khoshnevisan}*{pp.\ 377--384} contains a modern-day account
	that includes explicit proofs of assertions (i) and (ii) above. 
	The first assertion of the lemma follows.
	
	Next we can note that by
	the Markov property of $Y_\alpha$,
	\begin{equation}\begin{split}
		&\int_1^2\int_1^2 \P\left\{
			| Y_\alpha(r)-t|\le\e \,,
			| Y_\alpha(R)-s|\le\e\right\}\, dr\, dR\\
		&\qquad \le \int_1^2\int_r^2 \P\left\{|Y_\alpha(r)-t|\le
			\e\right\}\P\left\{ |Y_\alpha(R-r)-(s-t)|\le
			2\e\right\}\, dR\,dr\\
		& \hskip1.1in + \int_1^2\int_R^2\P\left\{|Y_\alpha(R)-s|\le
			\e\right\}\P\left\{ |Y_\alpha(r-R)-(s-t)|\le
			2\e\right\}\, dr\,dR.
	\end{split}\end{equation}
	We can appeal to subordination once again to find that there
	exists $c_2>0$ such that $p_r(a)\le c_2$ uniformly for all 
	$r\in[1\,,2]$ and $a\in[-M-1\,,M+1]$. This, and symmetry,
	together imply that
	\begin{equation}\begin{split}
		&\int_1^2\int_1^2 \P\left\{
			| Y_\alpha(r)-t|\le\e \,,
			| Y_\alpha(R)-s|\le\e\right\}\, dr\, dR\\
		& \hskip2.6in \le 4c_2\e\int_0^2 
			\P\left\{ |Y_\alpha(r)-(t-s)|\le
			2\e\right\}\, dr\\
		&\hskip2.6in \le 4e^2c_2\e \int_0^\infty 
			\P\left\{ |Y_\alpha(r)-(t-s)|\le
			2\e\right\}e^{-r}\, dr.
	\end{split}\end{equation}
	Let $u(a):=\int_0^\infty p_t(a)e^{-t}\, dt$ denote
	the one-potential density of $Y_\alpha$, and note that
	\begin{equation}
		\int_1^2\int_1^2 \P\left\{
		| Y_\alpha(r)-t|\le\e \,,
		| Y_\alpha(R)-s|\le\e\right\}\, dr\, dR
		\le 4e^2c_2\e \int_{|t-s|-2\e}^{
		|t-s|+2\e} u(z)\, dz.
	\end{equation}
	It is well known that there exist $c_3>1$ such that
	\begin{equation}\label{eq:u}
		\frac{1}{c_3|z|^\alpha}\le
		u(z) \le \frac{c_3}{|z|^\alpha},
		\qquad\text{for all $z\in[-2M-2\,,2M+2]$};
	\end{equation}
	see \ocite{Khoshnevisan}*{Lemma 3.4.1, p.\ 383},
	for instance.
	It follows that
	\begin{equation}
		\int_1^2\int_1^2 \P\left\{
		| Y_\alpha(r)-t|\le\e \,,
		| Y_\alpha(R)-s|\le\e\right\}\, dr\, dR
		\le 4e^2c_2c_3\e \int_{|t-s|-2\e}^{
		|t-s|+2\e} \frac{dz}{|z|^\alpha}.
	\end{equation}
	If $|t-s|\ge 4\e$, then we use the bound
	$|z|^{-\alpha}\le (|t-s|/2)^{-\alpha}$. Else, we use
	the estimate $\int_{|t-s|-2\e}^{|t-s|+2\e}(\cdots)
	\le\int_{-6\e}^{6\e}(\cdots)$. This leads us
	to the existence of a constant $c_4=c_4(M)>0$ such that for all $s,t\in D$
	and $\e\in(0\,,1)$, 
	\begin{equation}\label{eq:EZ22}\begin{split}
		\int_{|t-s|-2\e}^{|t-s|+2\e}
			\frac{dz}{|z|^\alpha} &\le c_4\e
			\min\left(\frac{1}{|t-s|}\wedge\frac 1\e\right)^\alpha\\
		&\le c_4\frac{\e}{|t-s|^\alpha}.
	\end{split}\end{equation}
	Part two of the lemma follows from this and \eqref{eq:condcond}.
\end{proof}

Now we prove the first inequality in Theorem \ref{th:hit:Y}.

\begin{proof}[Proof of Theorem \ref{th:hit:Y}: First Half]
	We can choose some $\mu\in\mathcal{P}(\Xi)$, and deduce
	from Lemma \ref{lem:Zeps:moments} and the Paley--Zygmund
	inequality \eqref{eq:PZ} that
	$\P_p\{ Z_\e(\mu)>0\} \ge 1/(c^3 I_{\phi(\alpha)}
	(\mu))$. Let $Y^\e$ denote the closed $\e$-enlargement
	of $Y_\alpha([1\,,2])$.
	
	Recall that $S(G)$ is closed because
	$\P_p\{\rho\leftrightarrow\infty\}=0$ \cite{HaggstromEtAl}*{Lemma 3.2}.
	Also note that
	\begin{equation}
		\{Z_\e(\mu)>0\}\subseteq\{ 
		S(G)\cap D\cap Y^\e\neq\varnothing\}. 
	\end{equation}
	Let $\e\to 0^+$
	to obtain the first inequality of Theorem
	\ref{th:hit:Y} after we
	optimize over $\mu\in\mathcal{P}(\Xi)$.
\end{proof}

The second half of Theorem \ref{th:hit:Y} is more difficult
to prove. We begin by altering the definition of $Z_\e(\mu)$
slightly as follows: For all $\e\in(0\,,1)$ and
$\mu\in\mathcal{P}(\Xi)$ define
\begin{equation}
	W_\e(\mu) :=  \frac{1}{(2\e)p^n}
	\int_\Xi \int_1^\infty \1_{\{
	\rho\stackrel{t}{\leftrightarrow} v\}\cap
	\{ |Y_\alpha (r)-t|\le \e\}} e^{-r}\, dr
	\,\mu(dv\, dt).
\end{equation}
[It might help to recall that $n$ 
denotes the height of the finite
tree $G$.] We can sharpen the second assertion
of Lemma \ref{lem:Zeps:moments}, and replace $Z_\e
(\mu)$ by $W_\e(\mu)$, as follows: There exists 
a constant $c=c(M)>0$ such that
\begin{equation}\label{EW2}
	\E_p\left[ W_\e^2(\mu) \right] \le
	c I_{\phi_\e(\alpha)}(\mu),
\end{equation}
where
\begin{equation}
	\phi_\e(\alpha) \left( (v\,,t)\,;(w\,,s)\right)
	:= h\left( (v\,,t)\,;(w\,,s)\right) \cdot
	\left( \frac{1}{|t-s|}\wedge 
	\frac1\e\right)^\alpha.
\end{equation}
The aforementioned sharpening rests on \eqref{eq:EZ22}
and not much more. So we omit the details.

Define $\mathcal{Y} (t)$ to be the sigma-algebra
generated by $\{ Y_\alpha(r)\}_{0\le r\le t}$. 
We can
add to $\mathcal{Y} (t)$ all $\P$-null sets, and even
make it right-continuous [with respect to the usual total 
order on $\R$]. Let us denote the resulting sigma-algebra
by $\mathcal{Y}(t)$ still, and the corresponding
filtration by $\mathcal{Y}$. 

Choose and fix $\sigma,\tau\in\{-\,,+\}$, and for
all $v\in\partial G$ and $r,t\ge 0$ define
\begin{equation}
	\mathcal{G}_{_{(\sigma,\tau)}} (v\,,t\,,r) :=
	\mathcal{F}_{_{(\sigma,\tau)}} (v\,,t) \times
	\mathcal{Y}(r).
\end{equation}
We say that $(v\,,t\,,r)\ll_{_{(\sigma,\tau)}}(w\,,s\,,u)$
when $(v\,,t)<_{_{(\sigma,\tau)}}(w\,,s)$ and
$r\le u$. Thus, each $\ll_{{(\sigma,\tau)}}$ defines a
partial order on $\partial G\times\R_+\times\R_+$.

Choose and fix $\sigma,\tau\in\{-\,,+\}$.
Because $\mathcal{F}_{_{(\sigma,\tau)}}$ is a two-parameter,
commuting filtration in the partial order
$<_{_{(\sigma,\tau)}}$, and since 
$\mathcal{Y}$ is the [one-parameter] independent
filtration generated by a reversible Feller process, it follows
readily that $\mathcal{G}_{_{(\sigma,\tau)}}$ is a three-parameter,
commuting filtration in the partial order
$\ll_{_{(\sigma,\tau)}}$. In particular, the following analogue of
\eqref{eq:Cairoli1} is valid: For all $V\in L^2(\P_p)$,
\begin{equation}\label{eq:Cairoli2}
	\E_p\left(
	\sup_{(v,t,r)\in\Xi\times\R_+}\left| 
	\E_p \left[ V \,\left|\, \mathcal{F}_{_{(\sigma,\tau)}}
	(v\,,t\,,r)
	\right.\right] \right|^2 \right) \le 64 \E_p\left[
	V^2\right].
\end{equation}
\cite{Khoshnevisan}*{Theorem 2.3.2, p.\ 235}.

Next, we note that or all $(w\,,s\,,u)\in\partial G\times D\times[1\,,2]$,
and all $\sigma,\tau\in\{-\,,+\}$, the following is valid
$\P_p$-almost surely:
\begin{equation}\label{eq:hit:LB}\begin{split}
	&\E_p\left[ W_\e(\mu) \,\left|\,
		\mathcal{G}_{_{(\sigma,\tau)}} (w\,,s\,,u) \right. \right]\\
	& \hskip1.in \ge \frac{1}{(2\e)p^n}
		\int_{(v,t)<_{_{(\sigma,\tau)}}
		(w,s)}\ \int_u^\infty \mathcal{H}
		e^{-r} \, dr\, \mu(dv\,dt)\cdot\1_{\{ \rho\stackrel{s}{\leftrightarrow}w
		\}\cap\{|Y_\alpha(u)-s|\le\e/2\}}.
\end{split}\end{equation}
Here,
\begin{equation}\begin{split}
	\mathcal{H} &:=
		\P_p\left( \left.\rho\stackrel{t}{\leftrightarrow}v
		\,, |Y_\alpha(r)-t|\le\e 
		\,\right|\, \mathcal{G}_{_{(\sigma,\tau)}} (w\,,s\,,u)\right)\\
	&= \P_p\left( \left.\rho\stackrel{t}{\leftrightarrow}v
		\,\right|\, \mathcal{F}_{_{(\sigma,\tau)}}(w\,,s)\right) \times
		\P\left( \left. \left| Y_\alpha(r)-t\right|\le\e\,
		\right|\, \mathcal{Y}(u)\right).
\end{split}\end{equation}
By the Markov property, $\P_p$-almost surely on
$\{\rho\stackrel{s}{\leftrightarrow}w\}$,
\begin{equation}
	\P_p\left( \left.\rho\stackrel{t}{\leftrightarrow}v
	\,\right|\, \mathcal{F}_{_{(\sigma,\tau)}}(w\,,s)\right)
	= p^n h\left( (v\,,t)\,;(w\,,s)\right).
\end{equation}
See \eqref{eq:condcond}. On the other hand, the Markov property
of $Y_\alpha$ dictates that almost surely on
$\{|Y_\alpha(u)-s|\le\e/2\}$,
\begin{equation}\begin{split}
	\P\left( \left. \left| Y_\alpha(r)-t\right|\le\e\,
		\right|\, \mathcal{Y}(u) \right) &\ge
		\P\left\{  \left| Y_\alpha(r-u)-(t-s)\right|\le
		\frac\e 2 \right\}\\
	&:= \mathcal{A}.
\end{split}\end{equation}
Note that because $u\in[1\,,2]$,
\begin{equation}\begin{split}
	\int_u^\infty \mathcal{A} e^{-r}\, dr &\ge \frac{1}{e^2}
		\int_0^\infty
		\P\left\{  \left| Y_\alpha(r)-(t-s)\right|\le
		\frac\e 2 \right\} e^{-r}\, dr\\
	& = \frac{1}{e^2}\int_{|t-s|-(\e/2)}^{|t-s|+(\e/2)}
		u(z)\, dz\\
	&\ge c_5(2\e)
		\min\left(\frac{1}{|t-s|}\wedge\frac 1\e\right)^\alpha,
\end{split}\end{equation}
where $c_5$ does not depend on $(\e\,,\mu\,;t\,,s)$.
The ultimate inequality follows from a
similar argument that was used earlier to derive \eqref{eq:EZ22}.
So we omit the details.

Thus, we can plug the preceding bounds into \eqref{eq:hit:LB}
and deduce that $\P_p$-a.s.,
\begin{equation}\label{eq:sum}\begin{split}
	&\E_p\left[ W_\e(\mu) \,\left|\,
		\mathcal{G}_{_{(\sigma,\tau)}} (w\,,s\,,u) \right. \right]\\
	&\qquad \ge c_5 \int_{(v,t)<_{_{(\sigma,\tau)}}
		(w,s)}\ \phi_\e(\alpha)\Big( 
		(v\,,t)\,;(w\,,s)\Big)
		\, \mu(dv\,dt)
		\times \1_{\{ \rho\stackrel{s}{\leftrightarrow}w
		\}\cap\{|Y_\alpha(u)-s|\le\e/2\}}.
\end{split}\end{equation}
Moreover, it is possible to check that there exists
one null set outside which the preceding holds
for all $(w\,,s\,,u)\in\Xi\times[1\,,2]$.
We are in a position to complete our proof of Theorem 
\ref{th:hit:Y}.

\begin{proof}[Proof of Theorem \ref{th:hit:Y}: Second Half]
	Without loss of very much generality, we may assume that
	\begin{equation}
		\P_p\{ S(G)\cap D\cap\overline{Y_\alpha([1\,,2])}\}>0,
	\end{equation}
	for otherwise there is nothing to prove.

	Let us introduce two abstract cemetery states: 
	$\delta\not\in\partial G$
	and $\infty\not\in\R_+$. 
	Then, there exists a map 
	$(\mathbf{w}_\e\,,\mathbf{s}_\e\,,
	\mathbf{u}_\e):\Omega\mapsto (\partial G\cup\{\delta\})
	\times (D\cup\{\infty\})\times
	([1\,,2]\cup\{\infty\})$ with the properties
	that:
	\begin{enumerate}
		\item $(\mathbf{w}_\e\,,
			\mathbf{s}_\e\,,\mathbf{u}_\e)(\omega)
			\neq (\delta\,,\infty\,,\infty)$ if and only if
			there exists $(w\,,s\,,u)(\omega)
			\in\Xi\times[1\,,2]$ such that
			$\rho\stackrel{s(\omega)}{\leftrightarrow}w(\omega)$ and
			$|Y_\alpha(u)-s|(\omega) \le\e/2$; and
		\item If $(\mathbf{w}_\e\,,\mathbf{s}_\e
			\,,\mathbf{u}_\e)(\omega)
			\neq (\delta\,,\infty\,,\infty)$, then (1) 
			holds with 
			$(\mathbf{w}_\e\,,\mathbf{s}_\e\,,
				\mathbf{u}_\e)(\omega)$
			in place of $(w\,,s\,,u)(\omega)$.
	\end{enumerate}
	See the proof of Theorem \ref{th:main} for a closely-related
	construction.
	
	Consider the event,
	\begin{equation}
		H(\e) := \left\{\omega:\
		(\mathbf{w}_\e\,,\mathbf{s}_\e\,,\mathbf{u}_\e)
		(\omega)
		\neq (\delta\,,\infty\,,\infty)\right\}.
	\end{equation}
	Thus, we can deduce that $\mu_\e\in\mathcal{P}
	(\Xi)$, where
	\begin{equation}
		\mu_\e (A\times B) := \P_p\left(\left.
		\left( \mathbf{w}_\e\,,\mathbf{s}_\e\right)\in
		A\times B
		\,\right|\, H(\e) \right),
	\end{equation}
	valid for all measurable $A\times B\subseteq\Xi$.
	
	Because of \eqref{eq:TO},
	we may apply \eqref{eq:sum} with $\mu_\e$ in place
	of $\mu$ and $(\mathbf{w}_\e\,,\mathbf{s}_\e\,,\mathbf{u}_\e)$
	in place of $(w\,,s\,,u)$ to find that $\P_p$-a.s.,
	\begin{equation}\label{eq:sumsum}\begin{split}
		&\sum_{\sigma,\tau\in\{-,+\}}
			\sup_{(w,s,u)\in\Xi\times[1,2]}
			\E_p\left[ W_\e(\mu_\e) \,\left|\,
			\mathcal{G}_{_{(\sigma,\tau)}} (w\,,s\,,u) \right. \right]\\
		& \hskip2.2in \ge c_5 \int_\Xi \phi_\e(\alpha)\Big( 
			(v\,,t)\,;(\mathbf{w}_\e\,,\mathbf{s}_\e)\Big)
			\, \mu_\e (dv\,dt)
			\cdot \1_{ H(\e)}.
	\end{split}\end{equation}
	We can square both ends of this inequality and take expectations
	[$\P_p$]. Owing to \eqref{eq:CS},
	the expectation of the square of the left-most term
	is at most
	\begin{equation}\label{eq:goal1}\begin{split}
		4 \sum_{\sigma,\tau\in\{-,+\}}
			\E_p\left(
			\sup_{(w,s,u)\in\Xi\times[1,2]}\left|
			\E_p\left[ W_\e(\mu_\e) \,\left|\,
			\mathcal{G}_{_{(\sigma,\tau)}} (w\,,s\,,u) \right. \right]
			\right|^2 \right) &\le 1024 \E_p\left[ W_\e^2(\mu_\e) \right]\\
		& \le 1024 c I_{\phi_\e(\alpha)}(\mu_\e).
	\end{split}\end{equation}
	See \eqref{eq:Cairoli2} and \eqref{EW2}. We emphasize that
	the constant $c$ is finite and positive, and does not
	depend on $(\e\,,\mu_\e)$.
	
	On the other hand, by the Cauchy--Schwarz inequality,
	the expectation of the square
	of the right-most term in \eqref{eq:sumsum} is
	equal to
	\begin{equation}\label{eq:goal2}\begin{split}
		&c_5^2 \E_p\left[\left.\left(
			\int_\Xi \phi_\e(\alpha)\Big( 
			(v\,,t)\,;(\mathbf{w}_\e\,,\mathbf{s}_\e)\Big)
			\, \mu_\e (dv\,dt)\right)^2\,\right|\,
			H(\e) \right] \P_p(H(\e))\\
		&\hskip1in\ge c_5^2 \left(\E_p\left[\left.
			\int_\Xi \phi_\e(\alpha)\Big( 
			(v\,,t)\,;(\mathbf{w}_\e\,,\mathbf{s}_\e)\Big)
			\, \mu_\e (dv\,dt)\,\right|\,
			H(\e) \right]\right)^2 \P_p(H(\e))\\
		&\hskip1in = c_5^2 \left( I_{\phi_\e(\alpha)}(\mu_\e)\right)^2
			\P_p(H(\e)).
	\end{split}\end{equation}
	We can combine \eqref{eq:goal1} with \eqref{eq:goal2} to 
	find that for all $N>0$,
	\begin{equation}\begin{split}
		\P_p(H(\e)) &\le \frac{1024c}{c_5^2}
			\left[ I_{\phi_\e(\alpha)}(\mu_\e)\right]^{-1}\\
		&\le\frac{1024c}{c_5^2}
			\left[ I_{N\wedge \phi_\e(\alpha)}(\mu_\e)\right]^{-1}.
	\end{split}\end{equation}
	Perhaps it is needless to say that $N\wedge\phi_\e(\alpha)$
	is the function whose evaluation at $((v\,,t)\,,(w\,,s))\in\Xi\times\Xi$ is
	the minimum of the constant $N$ and $\phi_\e(\alpha)(
	(v\,,t)\,(w\,,s))$. Evidently, $N\wedge\phi_\e(\alpha)$ is
	bounded and lower semicontinuous on $\Xi\times\Xi$. By compactness
	we can find $\mu_0\in\mathcal{P}(\Xi)$ such that $\mu_\e$
	converges weakly to $\mu_0$. As $\e\downarrow 0$,
	the sets $H(\e)$ decrease set-theoretically, and their 
	intersection includes
	$\{S(G)\cap D\cap \overline{Y_\alpha([1\,,2])}
	\neq\varnothing\}$. As a result we have
	\begin{equation}
		\P_p\left\{ S(G)\cap D\cap \overline{Y_\alpha([1\,,2])}
		\neq\varnothing\right\} \le \frac{1024 c}{c_5^2}
		\left[ I_{N\wedge \phi(\alpha)}(\mu_0)\right]^{-1}.
	\end{equation}
	Let $N\uparrow\infty$ and appeal to the monotone convergence theorem
	to find that
	\begin{equation}\begin{split}
		\P_p\left\{ S(G)\cap D\cap \overline{Y_\alpha([1\,,2])}
			\neq\varnothing\right\} &\le \frac{1024 c}{c_5^2}
			\left[ I_{\phi(\alpha)}(\mu_0)\right]^{-1}\\
		&\le \frac{1024 c}{c_5^2} \mathrm{Cap}_{\phi(\alpha)}(
			\partial G\times D).
	\end{split}\end{equation}
	This concludes the proof of Theorem \ref{th:hit:Y}.
\end{proof}

\begin{proof}[Proof of Theorem \ref{th:dim}]
	Define $R_\alpha := \overline{Y_\alpha([1\,,2])}$,
	and recall the theorem of \ocite{McKean}:
	\emph{For all Borel sets $B\subseteq\R$,}
	\begin{equation}
		\P\left\{ R_\alpha\cap B\neq\varnothing\right\}
		>0\quad\Longleftrightarrow\quad \mathrm{Cap}_\alpha(B)>0.
	\end{equation}
	Here, $\mathrm{Cap}_\alpha(B)$ denotes the $\alpha$-dimensional
	(Bessel-) Riesz capacity of $B$ \cite{Khoshnevisan}*{Appendix C}.
	That is,
	\begin{equation}
		\mathrm{Cap}_\alpha(B):=\left[
		\inf_{\mu\in\mathcal{P}(B)} I_\alpha(\mu)\right]^{-1},
	\end{equation}
	where
	\begin{equation}
		I_\alpha(\mu) := \iint \frac{\mu(dx)\,\mu(dy)}{|x-y|^\alpha}.
	\end{equation}
	
	Now let $R^1_\alpha,R^2_\alpha,\ldots$
	be i.i.d.\ copies of $R_\alpha$, all independent
	of our dynamical percolation process as well. Then, by
	the Borel--Cantelli lemma,
	\begin{equation}\label{eq:codim1}
		\P\left\{ \bigcup_{j=1}^\infty
		R^j_\alpha\cap B\neq\varnothing\right\} =
		\begin{cases}
			1,&\text{if }\mathrm{Cap}_\alpha(B)>0,\\
			0,&\text{if }\mathrm{Cap}_\alpha(B)=0.
		\end{cases}
	\end{equation}
	Set $B:=S(G)\cap D$ and condition, once on 
	$G$ and once on $\cup_{j=1}^\infty R^j_\alpha$. 
	Then, the preceding and Theorem \ref{th:hit:Y} together imply that 
	\begin{equation}\label{eq:capcap}
		\P_p\Big\{
		\mathrm{Cap}_\alpha\left(S(G)\cap D
		\right)>0\Big\}=
		\begin{cases}
			1,&\text{if}\quad\mathrm{Cap}_{\phi(\alpha)}
				(\partial G\times D)>0,\\
			0,&\text{if}\quad\mathrm{Cap}_{\phi(\alpha)}
				(\partial G\times D)=0.
		\end{cases}
	\end{equation}
	The remainder of the theorem follows from 
	Frostman's theorem \ycite{Frostman}: {\it For all Borel sets
	$F\subset\R$,}
	\begin{equation}
		\dimh F = \sup\{0<\alpha<1:\
		\mathrm{Cap}_\alpha(F)>0\},
	\end{equation}
	which we apply
	with $F:= S(G)\cap D$. For a pedagogic
	account of Frostman's theorem
	see \ocite{Khoshnevisan}*{Theorem 2.2.1, p.\ 521}.
\end{proof}

\newpage
\section{\bf On Spherically Symmetric Trees}\label{sec:SST}

Suppose $G$ is spherically symmetric, and
let $\mathfrak{m}_{_{\partial G}}$ denote the uniform measure on $\partial G$.
One way to define $\mathfrak{m}_{_{\partial G}}$ is as follows:
For all $f:\mathbf{Z}_+\to\R_+$ and all $v\in\partial G$,
\begin{equation}\label{eq:unif:G}
	\int_{\partial G} f ( |v\wedge w|)\, \mathfrak{m}_{_{\partial G}}(dw)
	=\sum_{l=0}^{n-1} \frac{f( l )}{|G_l|},
\end{equation}
where $n\in\mathbf{Z}_+\cup\{\infty\}$ denotes the height
of $G$. In particular, we may
note that if $G$ is infinite then for all $\nu\in\mathcal{P}(\R_+)$,
\begin{equation}
	I_h(\mathfrak{m}_{_{\partial G}}\times\nu) =
	\iint \sum_{l=0}^\infty \frac{1}{|G_l |}
	\left( 1+\frac qp e^{-|t-s|}\right)^l  \, \nu(ds)\, \nu(dt).
\end{equation}
This is the integral in \eqref{eq:iint}.

Yuval Peres asked us if the constant $960e^3\approx 19282.1$
in \eqref{eq:UB}
can be improved upon. The following answers this question
by replacing $960e^3$ by $512$. Although we do not know how
to improve this constant further, it seems unlikely to be
the optimal one.

\begin{theorem}\label{th:YuvalConj}
	Suppose $G$ is an infinite, spherically symmetric tree,
	and $\P_p\{\rho\leftrightarrow\infty\}=0$. Then, for all
	compact sets $D\subseteq[0\,,1]$,
	\begin{equation}
		\frac{1}{2\inf_{\nu\in\mathcal{P}(D)}
		I_h(\mathfrak{m}_{_{\partial G}}\times\nu)}\le 
		\P_p\left( \bigcup_{t\in D} \left\{
		\rho\stackrel{t}{\leftrightarrow}\infty \right\}
		\right) \le \frac{512}{\inf_{\nu\in\mathcal{P}(D)}
		I_h(\mathfrak{m}_{_{\partial G}}\times\nu)},
	\end{equation}
	where $\inf\varnothing:=\infty$ and $1/\infty:=0$.
	The condition that $\P_p\{\rho\leftrightarrow\infty\}=0$
	is not needed for the upper bound.
\end{theorem}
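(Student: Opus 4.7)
The plan is to deduce Theorem \ref{th:YuvalConj} directly from Theorem \ref{th:main} and the lower bound \eqref{eq:LB} by establishing, under spherical symmetry, the identity
\begin{equation*}
    \mathrm{Cap}_h(\partial G\times D) = \left[\inf_{\nu\in\mathcal{P}(D)} I_h(\mathfrak{m}_{_{\partial G}}\times\nu)\right]^{-1}.
\end{equation*}
The inequality ``$\le$'' in this identity is immediate, since each $\mathfrak{m}_{_{\partial G}}\times\nu$ is an admissible competitor in the definition of $\mathrm{Cap}_h$. For the reverse inequality, I would fix an arbitrary $\mu\in\mathcal{P}(\partial G\times D)$ with marginal $\nu$ on $D$ and prove $I_h(\mu)\ge I_h(\mathfrak{m}_{_{\partial G}}\times\nu)$ by a level-by-level analysis.

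To that end, set $r(s,t):=1+(q/p)e^{-|t-s|}$ and telescope $r^{|v\wedge w|}=\sum_{k\ge 0}\beta_k(s,t)\,\1_{\{|v\wedge w|\ge k\}}$, with $\beta_0\equiv 1$ and $\beta_k(s,t):=r(s,t)^{k-1}(r(s,t)-1)\ge 0$. Using $\1_{\{|v\wedge w|\ge k\}}=\sum_{u\in G_k}\1_{\{v\text{ through }u\}}\,\1_{\{w\text{ through }u\}}$ and disintegrating $\mu=\mu_t\otimes\nu$, the functions $f_u(t):=\mu_t(\{v:\ v\text{ passes through }u\})$ satisfy $\sum_{u\in G_k} f_u\equiv 1$, and
\begin{equation*}
    I_h(\mu) = \sum_{k=0}^\infty \sum_{u\in G_k} B_k(f_u,f_u),\qquad B_k(f,g):=\iint \beta_k(s,t)\,f(s)\,g(t)\,\nu(ds)\,\nu(dt).
\end{equation*}
Replacing $\mu$ by $\mathfrak{m}_{_{\partial G}}\times\nu$ replaces each $f_u$ by the constant $1/|G_k|$, so the $k$-th term for the product measure equals $B_k(\1,\1)/|G_k|$. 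The identity thus reduces to the level-wise bound $\sum_{u\in G_k} B_k(f_u,f_u)\ge B_k(\1,\1)/|G_k|$ for every $k$.

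The key step is to prove this level-wise bound. The binomial expansion
\begin{equation*}
    \beta_k(s,t) = \frac{q}{p}\,e^{-|t-s|}\sum_{j=0}^{k-1}\binom{k-1}{j}\left(\frac{q}{p}\right)^{j} e^{-j|t-s|}
\end{equation*}
exhibits $\beta_k$ as a positive combination of the classical positive-semidefinite exponential kernels $e^{-m|t-s|}$, so $B_k$ is a PSD bilinear form on $L^2(\nu)$ and Cauchy--Schwarz gives $B_k(f_u,\1)^2\le B_k(f_u,f_u)\,B_k(\1,\1)$. Using $\sum_u f_u\equiv 1$ and summing, then a second Cauchy--Schwarz,
\begin{equation*}
    B_k(\1,\1) = \sum_u B_k(f_u,\1) \le \sqrt{B_k(\1,\1)}\sum_u \sqrt{B_k(f_u,f_u)} \le \sqrt{|G_k|\,B_k(\1,\1)}\,\sqrt{\sum_u B_k(f_u,f_u)},
\end{equation*}
which on squaring and rearranging yields the claimed level-wise bound. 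Summing over $k$ produces the displayed identity, and substituting it into Theorem \ref{th:main} (respectively \eqref{eq:LB}) delivers the upper (respectively lower) bound of Theorem \ref{th:YuvalConj}; the constant $512$ is inherited verbatim. The main obstacle is precisely the level-wise inequality, whose essence is the positive-semidefiniteness of each $\beta_k$; every other step is routine telescoping or a direct substitution.
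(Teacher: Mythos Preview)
Your argument is correct and takes a genuinely different route from the paper's. The paper does \emph{not} reduce Theorem~\ref{th:YuvalConj} to Theorem~\ref{th:main}; instead it re-runs the probabilistic machinery of the proof of Theorem~\ref{th:main} with the specific test measure $\mathfrak{m}_{_{\partial G}}\times\nu$. The point there is that, by spherical symmetry, the quantity $\int_{\partial G} h((w,s);(v,t))\,\mathfrak{m}_{_{\partial G}}(dv)$ does not depend on $w$, so the conditional-expectation lower bound \eqref{eq:key2} already produces the ``right'' functional of $\nu$ without having to identify a minimizing measure on $\partial G\times D$.

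Your approach is more conceptual: you prove the purely potential-theoretic identity $\mathrm{Cap}_h(\partial G\times D)=[\inf_{\nu}I_h(\mathfrak{m}_{_{\partial G}}\times\nu)]^{-1}$ and then invoke Theorem~\ref{th:main} as a black box. This has the pleasant side effect of rigorously verifying the assertion, made informally in the paper just before Theorem~\ref{th:main}, that for spherically symmetric $G$ the energy-minimizing measure on $\partial G\times D$ is of product form $\mathfrak{m}_{_{\partial G}}\times\nu$. The heart of your argument---the observation that each $\beta_k(s,t)=r(s,t)^{k-1}(r(s,t)-1)$ is a nonnegative combination of the positive-definite kernels $e^{-m|t-s|}$, so that Cauchy--Schwarz applies to the bilinear forms $B_k$---is exactly the right idea and is not used in the paper. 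The paper's route is shorter if one already has the proof of Theorem~\ref{th:main} in hand; yours isolates the role of spherical symmetry more cleanly and yields a statement of independent interest.
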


The proof follows that of Theorem \ref{th:main} closely.
Therefore, we sketch the highlights of the proof only.

\begin{proof}[Sketch of Proof]
	The lower bound follows immediately from \eqref{eq:LB},
	so we concentrate on the upper bound only. As we have done before,
	we may, and will, assume without loss of generality that
	$G$ is a finite tree of height $n$.
	
	For all
	$\nu\in\mathcal{P}(D)$ consider $Z(\mathfrak{m}_{_{\partial G}}\times\nu)$
	defined in \eqref{eq:Z}. That is,
	\begin{equation}
		Z(\mathfrak{m}_{_{\partial G}}\times\nu) = \frac{1}{p^n|G_n|} \int_D \sum_{v\in G_n}
		\1_{\{\rho\stackrel{t}{\leftrightarrow}v\}} \, \nu(dt).
	\end{equation}
	It might help to point out that in the present setting,
	$G_n$ is identified with $\partial G$.
	According to \eqref{eq:moments},
	\begin{equation}\label{eq:EZ222}
		\E_p[Z(\mathfrak{m}_{_{\partial G}}\times\nu)]=1\quad\text{and}\quad
		\E_p\left[ Z^2(\mathfrak{m}_{_{\partial G}}\times\nu)\right] \le
		2I_h(\mathfrak{m}_{_{\partial G}}\times\nu).
	\end{equation}
	
	In accord with \eqref{eq:key2}, outside a single
	null set, the following holds for all
	$w\in\partial G$, $\sigma,\tau\in\{-\,,+\}$,
	and $s\ge 0$:
	\begin{equation}\begin{split}
		&\sum_{\sigma,\tau\in\{-,+\}}
			\E_p\left[ Z(\mathfrak{m}_{_{\partial G}}
			\times\nu)\,\left|\, \mathcal{F}_{_{(
			\sigma,\tau)}} (w\,,s) \right.\right]\\
		&\hskip2in\ge \int_{
			(v,t)\in\Xi} \left( 1 + \frac{q}{p} e^{-|t-s|}
			\right)^{|v\wedge w|} \,
			(\mathfrak{m}_{_{\partial G}}\times\nu)(dv\, dt)\cdot 
			\1_{\{ \rho\stackrel{s}{\leftrightarrow}
			w \}}\\
		& \hskip2in =\int_D \sum_{l=0}^{n-1} \frac{1}{|G_l |}
			\left( 1+ \frac{q}{p}
			e^{-|t-s|}\right)^l  \, \nu(dt)\times 
			\1_{\{ \rho\stackrel{s}{\leftrightarrow}
			w \}}\\
		& \hskip2in = I_h(\mathfrak{m}_{_{\partial G}}\times\nu)\cdot 
			\1_{\{ \rho\stackrel{s}{\leftrightarrow}
			w \}}.
	\end{split}\end{equation}
	See \eqref{eq:unif:G} for the penultimate line.
	Next we take the supremum of the left-most term
	over all $w$, and replace $w$ by $\mathbf{w}$
	in the right-most term; then
	square and take expectations, as we did in the course
	of the proof of Theorem \ref{th:main}.
\end{proof}

Finally, let us return briefly to the Hausdorff dimension
of $S(G)\cap D$ in the case that $G$ is spherically symmetric.
First we recall Theorem 1.6 of \ocite{HaggstromEtAl}: 
\emph{If $\P_p(\cup_{t\in D}\{\rho\stackrel{t}{\leftrightarrow}
\infty\})=1$ then $\P_p$-a.s.,}
\begin{equation}\label{eq:dimh}
	\dimh S(G) = \sup\left\{ \alpha>0:\
	\sum_{ l =1}^\infty \frac{p^{-l} l ^{\alpha-1}}{
	|G_ l |}<\infty \right\}.
\end{equation}
Next we announce the Hausdorff dimension of $S(G)\cap D$
in the case that $G$ is spherically symmetric.

\begin{theorem}\label{th:dimh:SS}
	Suppose that the tree $G$ is infinite and spherically symmetric.
	If, in addition, 
	$\P_p(\cup_{t\in D}\{\rho\stackrel{t}{\leftrightarrow}\infty\})>0$,
	then for all
	compact sets $D\subseteq[0\,,1]$,
	\begin{equation}
		\dimh \left( S(G)\cap D \right) =
		\sup\left\{ 0<\alpha<1:\ \inf_{\nu\in\mathcal{P}(D)}
		I_{\phi(\alpha)}(\mathfrak{m}_{_{\partial G}}\times\nu)<\infty
		\right\},
	\end{equation}
	$\P_p$-almost surely on $\{S(G)\cap D\neq\varnothing\}$.
\end{theorem}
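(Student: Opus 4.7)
The plan is to combine Theorem \ref{th:dim} with a symmetrization argument that, for spherically symmetric $G$, identifies the capacity $\mathrm{Cap}_{\phi(\alpha)}(\partial G\times D)$ with $[\inf_\nu I_{\phi(\alpha)}(\mathfrak{m}_{\partial G}\times\nu)]^{-1}$. I will focus on the case $\P_p\{\rho\leftrightarrow\infty\}=0$, in which Theorem \ref{th:dim} reduces matters to proving the exact identity
\[
\inf_{\mu\in\mathcal{P}(\partial G\times D)} I_{\phi(\alpha)}(\mu) \;=\; \inf_{\nu\in\mathcal{P}(D)} I_{\phi(\alpha)}(\mathfrak{m}_{\partial G}\times\nu),
\]
of which the inequality $\leq$ is immediate. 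The complementary regime $p>p_c$ I would handle separately: exponential decay of $p^{-l}/|G_l|$ forces $I_{\phi(\alpha)}(\mathfrak{m}_{\partial G}\times\nu)<\infty$ for any $\nu$ of finite $\alpha$-energy, and a stationarity/Fubini argument for $S(G)$ identifies $\dimh(S(G)\cap D)$ with $\dimh D$ on $\{S(G)\cap D\neq\varnothing\}$, matching the right-hand side.

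For the reverse inequality in the main case, I would use a symmetrization over the group $\Gamma$ of root-fixing automorphisms of $G$, equipped with Haar measure (viewed as a profinite group, or obtained by working with the finite truncations $G^{(n)}$ and passing to the limit). Because $G$ is spherically symmetric, $\Gamma$ acts transitively on each level $G_l$, so the $\Gamma$-average of any probability measure on $\partial G$ equals $\mathfrak{m}_{\partial G}$: for $u\in G_l$, the orbit $\{\gamma^{-1}u:\gamma\in\Gamma\}$ is all of $G_l$, so averaging $\mu_t(C_{\gamma^{-1}u})$ over $\gamma$ yields $|G_l|^{-1}=\mathfrak{m}_{\partial G}(C_u)$. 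Given $\mu\in\mathcal{P}(\partial G\times D)$ with second marginal $\nu$, the Haar average $\bar\mu(A\times B):=\int_\Gamma\mu(\gamma^{-1}A\times B)\,d\gamma$ then coincides with $\mathfrak{m}_{\partial G}\times\nu$. Since $\phi(\alpha)((v,t);(w,s))$ depends on $v,w$ only through $|v\wedge w|$, a quantity preserved by every tree automorphism, the kernel is diagonally $\Gamma$-invariant, so $I_{\phi(\alpha)}(\gamma\cdot\mu)=I_{\phi(\alpha)}(\mu)$ for every $\gamma\in\Gamma$. Jensen's inequality applied to the convex functional $I_{\phi(\alpha)}$ then gives
\[
I_{\phi(\alpha)}(\mathfrak{m}_{\partial G}\times\nu) = I_{\phi(\alpha)}(\bar\mu) \leq \int_\Gamma I_{\phi(\alpha)}(\gamma\cdot\mu)\,d\gamma = I_{\phi(\alpha)}(\mu),
\]
and infimizing over $\mu$ completes the identification of capacities.

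The step I expect to require the most care is the convexity of $I_{\phi(\alpha)}$, which is equivalent to positive definiteness of the kernel $\phi(\alpha)$ on signed measures. I propose to establish this by writing $\phi(\alpha)$ as the Schur product of two positive definite kernels. The kernel $h$ is positive definite because the bilinear form $I_h(\mu,\nu)$ is exactly the covariance $\E_p[Z(\mu)Z(\nu)]$ of the integrated connection indicators appearing in \eqref{eq:moments}; since $\mu\mapsto Z(\mu)$ is linear, this extends to signed measures to give $\iint h\,d\mu\,d\mu=\E_p[Z(\mu)^2]\geq 0$. The Riesz kernel $|t-s|^{-\alpha}$ is classically positive definite via its Fourier representation as a positive multiple of $|\xi|^{\alpha-1}$. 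The Schur product theorem then supplies positive definiteness of $\phi(\alpha)$, hence convexity of $I_{\phi(\alpha)}$; any lingering measure-theoretic concern about the Haar integral will be disposed of by the truncation argument, together with monotone convergence for the energies along the truncations.
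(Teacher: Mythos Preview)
Your approach is correct and genuinely different from the paper's. The paper does not identify $\mathrm{Cap}_{\phi(\alpha)}(\partial G\times D)$ with $[\inf_\nu I_{\phi(\alpha)}(\mathfrak{m}_{\partial G}\times\nu)]^{-1}$ at all; instead it simply reruns the entire machinery behind Theorem~\ref{th:dim}---the stable-process coupling, the moment estimates, Cairoli's inequality, and the random-point construction---with $Z(\mathfrak{m}_{\partial G}\times\nu)$ in place of $Z(\mu)$, exactly as in the sketch for Theorem~\ref{th:YuvalConj}. Your route is more modular: you take Theorem~\ref{th:dim} as a black box and then reduce the capacity via the automorphism group and convexity of $I_{\phi(\alpha)}$. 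The Schur-product argument for positive definiteness is sound; your identification of $h$ as the second-moment kernel of the field $(v,t)\mapsto p^{-n}\mathbf{1}_{\{\rho\stackrel{t}{\leftrightarrow}v\}}$ is in fact an \emph{equality} $\E_p[Z(\mu)Z(\nu)]=I_h(\mu,\nu)$, which you should verify directly since \eqref{eq:moments} only records the cruder bound $\E_p[Z^2(\mu)]\le 2I_h(\mu)$. The payoff of your approach is a clean separation between the probabilistic dimension formula and a purely potential-theoretic symmetry reduction; the paper's approach, by contrast, needs no positive-definiteness facts and stays entirely within the second-moment framework.

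Two small points. First, in the supercritical case you do not actually get exponential decay of $p^{-l}/|G_l|$ from $\P_p\{\rho\leftrightarrow\infty\}>0$; you only get summability $\sum_l p^{-l}/|G_l|<\infty$ (this is Lyons's criterion specialized to spherically symmetric trees). Fortunately summability is all you need, since $(1+(q/p)e^{-|t-s|})^l\le p^{-l}$ gives $I_{\phi(\alpha)}(\mathfrak{m}_{\partial G}\times\nu)\le \big(\sum_l p^{-l}/|G_l|\big)\,I_\alpha(\nu)$. Second, your ``stationarity/Fubini'' claim that $\dimh(S(G)\cap D)=\dimh D$ when $\P_p\{\rho\leftrightarrow\infty\}>0$ is not obviously justified by positive Lebesgue measure of $S(G)$ alone and would need a real argument; note, however, that the paper's own proof, which simply mirrors Theorem~\ref{th:dim}, also addresses only the case $\P_p\{\rho\leftrightarrow\infty\}=0$.
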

The strategy of the proof is exactly the same as that of
the proof of Theorem \ref{th:dim}, but we use
$Z(\mathfrak{m}_{_{\partial G}}\times\nu)$ in place of $Z(\mu)$. The minor differences
in the proofs are omitted here. 

For the purposes of comparison,
we mention the following consequence of \eqref{eq:unif:G}:
For all $\nu\in\mathcal{P}(\R_+)$,
\begin{equation}\label{eq:I:cross}\begin{split}
	I_{\phi(\alpha)}(\mathfrak{m}_{_{\partial G}}\times\nu) &=
		\iint \sum_{l=0}^\infty \frac{1}{|G_l|}
		\left( 1+\frac qp e^{-|t-s|}\right)^l  \frac{\nu(ds)\, \nu(dt)}{
		|t-s|^\alpha}\\
	&= \iint \sum_{l=0}^\infty \frac{p^{-l}}{|G_l|}
		\left( 1-q\left\{
		1-e^{-|t-s|}\right\} \right)^l  \frac{\nu(ds)\, \nu(dt)}{
		|t-s|^\alpha}.
\end{split}\end{equation}
It may appear that this expression is difficult to work with. To illustrate
that this is not always the case we derive the following bound
which may be of independent interest.
Our next result computes the Hausdorff
dimension of $S(G)\cap D$ in case that $D$ is a nice fractal;
i.e., one whose packing and Hausdorff dimensions agree. Throughout,
$\dimp$ denotes packing dimension \cites{Tricot,Sullivan}.

\begin{proposition}\label{pr:dimh:SS}
	Suppose $G$ is an infinite spherically symmetric tree.
	Suppose also 
	that $\P_p(\cup_{t\in D}\{\rho\stackrel{t}{\leftrightarrow}\infty\})>0$ 
	for a certain non-random compact set $D\subseteq\R_+$. If 
	$\delta:=\dimh D$ and $\Delta:=\dimp D$, then $\P_p$-almost surely
	on $\cup_{t\in D}\{\rho\stackrel{t}{\leftrightarrow}\infty\}$,
	\begin{equation}
		\left[ \dimh S(G)
		- (1-\delta) \right]_+ \le
		\dimh \left( S(G)\cap D\right) \le \left[ \dimh S(G)
		- (1-\Delta)\right]_+.
	\end{equation}
\end{proposition}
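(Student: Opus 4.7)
The plan is to handle the two bounds by different methods: the lower bound follows from the capacity criterion in Theorem \ref{th:dimh:SS} combined with Frostman's theorem, while the upper bound comes from a direct box-counting argument driven by the hitting-probability estimate of Theorem \ref{th:YuvalConj}. Set $\gamma := \dimh S(G)$, $a_l := p^{-l}/|G_l|$, and $N(L) := \sum_{l \le L} a_l$. A brief summation-by-parts argument applied to \eqref{eq:dimh} shows
$$\gamma = 1 - \limsup_{l \to \infty} \frac{\log N(l)}{\log l},$$
so for each $\e > 0$ one can extract a geometrically-growing sequence $l_k \to \infty$ with $N(l_k) \ge l_k^{1-\gamma-\e}$.

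\emph{Lower bound.} Fix $0 < \alpha < [\gamma+\delta-1]_+$ and decompose $\alpha = \beta + \rho - 1$ with $\beta < \gamma$ and $\rho < \delta$. Frostman's theorem produces $\nu \in \mathcal{P}(D)$ with $\iint|t-s|^{-\rho}\nu(ds)\nu(dt) < \infty$. Using $(p + q e^{-r})^l \le e^{-c_1 lr}$ for $r \in (0, 1]$ (valid since $\log(1-q(1-e^{-r})) \le -q(1-e^{-r})$ and $1-e^{-r} \ge (1-e^{-1})r$), and then the elementary inequality $l^{1-\beta} e^{-c_1 lr} \le C_\beta r^{\beta-1}$, I find
$$\sum_l a_l(p + qe^{-r})^l \le C r^{\beta-1} \sum_l a_l l^{\beta-1},$$
and the sum on the right converges because $\beta < \gamma$. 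Inserting this into \eqref{eq:I:cross} (and handling the bounded regime $|t-s| \ge 1$ trivially) yields $I_{\phi(\alpha)}(\mathfrak{m}_{\partial G} \times \nu) < \infty$. Theorem \ref{th:dimh:SS} then gives $\dimh(S(G) \cap D) \ge \alpha$ a.s.\ on the percolation event. Letting $\beta \uparrow \gamma$ and $\rho \uparrow \delta$ establishes the lower bound.

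\emph{Upper bound.} Fix $\alpha > [\gamma+\Delta-1]_+$, choose $\Delta' \in (\Delta\,, \alpha-\gamma+1)$, and pick $\e \in (0\,, \alpha+1-\gamma-\Delta')$. By the definition of packing dimension, write $D = \bigcup_i D_i$ countably with $\dimM D_i < \Delta'$ for each $i$, so it suffices to show $\dimh(S(G) \cap D_i) \le \alpha$ a.s. The probabilistic input is the following: for an interval $I$ of length $r \in (0\,,1)$ and any $\nu \in \mathcal{P}(I)$, the pointwise bound $(p + qe^{-|t-s|})^l \ge e^{-2qrl}$ on $I \times I$ yields $I_h(\mathfrak{m}_{\partial G} \times \nu) \ge e^{-1} N(c_0/r)$, so by Theorem \ref{th:YuvalConj} and stationarity,
$$\P_p\bigl(S(G) \cap I \neq \varnothing\bigr) \le \frac{C}{N(c_0/r)}.$$
Set $r_k := c_0/l_k$. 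Then $N(c_0/r_k) \ge (c_0/r_k)^{1-\gamma-\e}$, so $\P_p(S(G) \cap I \neq \varnothing) \le C' r_k^{1-\gamma-\e}$ for every $r_k$-interval $I$. Cover $D_i$ by $M_i(k) \le C_i r_k^{-\Delta'}$ intervals of length $r_k$; the expected $\alpha$-Hausdorff content of $S(G) \cap D_i$ via the sub-cover of those intervals meeting $S(G)$ is at most
$$r_k^\alpha \cdot C_i r_k^{-\Delta'} \cdot C' r_k^{1-\gamma-\e} = C'' r_k^{\alpha+1-\gamma-\Delta'-\e},$$
with strictly positive exponent. Since $r_k$ decays geometrically, $\sum_k$ of these expected contents is finite; by Tonelli the random sum of contents is a.s.\ finite, so each term tends to zero a.s., giving $\mathcal{H}^\alpha(S(G) \cap D_i) = 0$ a.s.\ and hence $\dimh(S(G) \cap D_i) \le \alpha$ a.s. Taking the countable union over $i$ and letting $\alpha \downarrow [\gamma+\Delta-1]_+$ along the rationals completes the proof.

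The main obstacle is the possible ``plateau'' behavior of $N$: one does not have $N(L) \gtrsim L^{1-\gamma-\e}$ for all large $L$, only along a cofinal subsequence. This is precisely why the upper bound involves the packing, rather than Hausdorff, dimension of $D$: the decomposition afforded by the definition of $\dimp$ gives us the freedom to cover $D$ at scales of our own choosing, which we then align with the ``good'' scales $r_k = c_0/l_k$ dictated by $G$.
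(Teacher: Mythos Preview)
Your proof is correct. The lower bound follows the paper's route (Theorem~\ref{th:dimh:SS} plus Frostman), though your pointwise estimate $\sum_l a_l(p+qe^{-r})^l \le C r^{\beta-1}\sum_l a_l l^{\beta-1}$ is a bit slicker than the paper's splitting of the double integral according to whether or not $|t-s|\le l^{-(1-\e)}$.

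The upper bound, however, is genuinely different. The paper stays inside the energy framework: for $\alpha>\gamma+\Delta-1$ it shows $I_{\phi(\alpha)}(\mathfrak{m}_{_{\partial G}}\times\nu)=\infty$ for \emph{every} $\nu\in\mathcal{P}(D)$, by bounding the kernel from below on $\{|t-s|\le 1/l\}$ and then invoking the Taylor--Tricot density theorem to force $\liminf_{\e\to 0}\e^{-\beta}\int\nu((t-\e,t+\e))\,\nu(dt)=\infty$ whenever $\beta>\Delta$; Theorem~\ref{th:dimh:SS} then finishes. Your route bypasses both Theorem~\ref{th:dimh:SS} and Taylor--Tricot entirely: you extract from Theorem~\ref{th:YuvalConj} the hitting estimate $\P_p(S(G)\cap I\neq\varnothing)\le C/N(c_0/|I|)$, align the covering scale with the subsequence $l_k$ on which $N$ is large, and run a first-moment covering argument using the upper-Minkowski characterization of $\dimp$. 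This is more elementary---it trades a density theorem for box-counting---and makes explicit the mechanism behind the appearance of $\dimp D$: your closing remark about the plateau behaviour of $N$ is exactly the right intuition. One small simplification: you do not actually need the $l_k$ to grow geometrically or the expected contents to be summable. Since your cover sums $U_k$ satisfy $\mathcal{H}^\alpha_{r_k}(S(G)\cap D_i)\le U_k$ and $\mathcal{H}^\alpha_{r_k}\uparrow\mathcal{H}^\alpha$, Fatou's lemma gives $\E[\mathcal{H}^\alpha(S(G)\cap D_i)]\le\liminf_k\E[U_k]=0$ as soon as $\E[U_k]\to 0$ along any sequence $r_k\downarrow 0$.
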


\begin{remark}
	An anonymous referee has pointed the following
	consequence: If the Hausdorff and packing dimensions
	of $D$ are the same, then a.s.\ on $\{S(G)\cap D\neq\varnothing\}$,
	\begin{equation}
		1-\dimh \left( S(G)\cap D\right)=
		\left( 1- \dimh S(G)\right) + 
		\left( 1 - \dimh D\right).
	\end{equation}
	This agrees with the principle that ``codimensions
	add.''
\end{remark}

\begin{proof}
	Without loss of generality, we may assume that $G$ has no leaves
	[except $\rho$].
	
	The condition
	$\P_p(\cup_{t\in D}\{\rho\stackrel{t}{\leftrightarrow}\infty\})>0$
	and ergodicity together prove that
	there a.s.\ $[\P_p]$ exists a time $t$ of percolation. Therefore,
	\eqref{eq:HPS} implies that
	\begin{equation}\label{eq:summability}
		\sum_{i=1}^\infty \frac{p^{-l}}{l|G_l|}<\infty.
	\end{equation}
	This is in place throughout.
	Next we proceed with the harder lower bound first.
	Without loss of generality, we may assume
	that $\dimh S(G)> 1-\delta$, for otherwise there is nothing
	left to prove.
	
	According to Frostman's lemma, 
	there exists $\nu\in\mathcal{P}(D)$ such that
	for all $\e>0$ we can find a constant $C_\e$ 
	with the following property:
	\begin{equation}\label{eq:frostman:bound}
		\sup_{x\in D} \nu\left( [x-r\,,x+r]\right) \le
		C_\e r^{\delta-\e},\qquad\text{for all}\quad r>0.
	\end{equation}
	\cite{Khoshnevisan}*{Theorem 2.1.1, p.\ 517.}
	We shall fix this $\nu$ throughout the derivation of the lower
	bound.
	
	Choose and fix $\alpha$ that satisfies
	\begin{equation}\label{eq:alpha:bd}
		0< \alpha < \frac{\dimh S(G) -1}{1-\e} + \delta-\e.
	\end{equation}
	[Because we assumed that
	$\dimh S(G)>1-\delta$ the preceding bound is valid
	for all $\e>0$ sufficiently small. Fix such a $\e$
	as well.]
	For this particular $(\nu\,, \alpha\,,\e)$
	we apply to \eqref{eq:I:cross} the elementary bound
	$1-q\{1-e^{-x}\}\le \exp(-qx/2)$,
	valid for all $0\le x\le 1$, and obtain
	\begin{equation}
		I_{\phi(\alpha)} (\mathfrak{m}_{_{\partial G}}\times\nu) \le
		 \sum_{l=0}^\infty \frac{p^{-l}}{|G_l|} \iint
		\exp\left( -\frac{ql|t-s|}{2} \right)  \frac{\nu(ds)\, \nu(dt)}{
		|t-s|^\alpha}.
	\end{equation}
	We split the integral in two parts, according to whether
	or not $|t-s|$ is small, and deduce that
	\begin{equation}
		I_{\phi(\alpha)} (\mathfrak{m}_{_{\partial G}}\times\nu) 
		\le \sum_{l=0}^\infty \frac{p^{-l}}{|G_l|} 
		\iint_{|t-s|\le l^{-(1-\e)}}
		\frac{\nu(ds)\, \nu(dt)}{
		|t-s|^\alpha}
		+\sum_{l=0}^\infty \frac{p^{-l}
		l^{(1-\e)\alpha}e^{-(ql^\e)/2}}{|G_l|}.
	\end{equation}
	Thanks to \eqref{eq:summability} the last
	term is a finite number, which we call $K_\e$. Thus,
	\begin{equation}\begin{split}
		I_{\phi(\alpha)} (\mathfrak{m}_{_{\partial G}}\times\nu)
			&\le \sum_{l=0}^\infty \frac{p^{-l}}{|G_l|} 
			\iint_{|t-s|\le l^{-(1-\e)}}
			 \frac{\nu(ds)\, \nu(dt)}{
			|t-s|^\alpha}+K_\e.
	\end{split}\end{equation}
	Integration by parts shows that if $f:\R\to\R_+\cup\{\infty\}$
	is even, as well
	as right-continuous and non-increasing on $(0\,,\infty)$,
	then for all $0< a<b$, 
	\begin{equation}\label{eq:IBP}
		\iint_{a\le |t-s|\le b}  f(s-t)\,\nu(ds)\,\nu(dt)
		= f(x)F_\nu(x)\Big|_a^b +
		\int_a^b F_\nu(x)\, d|f|(x),
	\end{equation}
	where $F_\nu(x) := (\nu\times\nu)\{(s\,,t)\in\R^2_+:\
	|s-t|\le x\}\le C_\e x^{\delta-\e}$
	thanks to \eqref{eq:frostman:bound}. 
	We apply this bound with $a\downarrow 0$, $b:=l^{-(1-\e)}$,
	and $f(x):=|x|^{-\alpha}$
	to deduce that
	\begin{equation}
		\iint_{|t-s|\le l^{-(1-\e)}}
		\frac{\nu(ds)\, \nu(dt)}{
		|t-s|^\alpha}
		\le A l^{(1-\e)(\alpha-\delta+\e)},
	\end{equation}
	since $\alpha<\delta-\e$ by \eqref{eq:alpha:bd}.
	Here, $A:=C_\epsilon(\delta-\e)/(-\alpha+\delta-\e)$.
	Consequently,
	\begin{equation}
		I_{\phi(\alpha)} (\mathfrak{m}_{_{\partial G}}\times\nu) 
		\le A\sum_{l=0}^\infty \frac{p^{-l}l^{
		(1-\e)(\alpha-\delta+\e)}}{|G_l|}
		+K_\e,
	\end{equation}
	which is finite
	thanks to \eqref{eq:alpha:bd} and \eqref{eq:dimh}.
	It follows from Theorem \ref{th:dimh:SS} that $\P_p$-almost
	surely,
	$\dimh (S(G)\cap D) \ge \alpha$. Let $\e\downarrow 0$
	and $\alpha\uparrow \dimh S(G)-1+\delta$ in \eqref{eq:alpha:bd}
	to obtain the desired lower bound.
		
	Choose and fix $\beta>\Delta$ and
	$\alpha>\dimh S(G)+\beta-1$. We appeal to
	\eqref{eq:I:cross} and the following elementary bound:
	For all integers $l\ge 1$ and all $0\le x\le 1/l$,
	we have
	$(1-q\{1-e^{-x}\})^l\ge p$.
	It follows from this and \eqref{eq:I:cross}
	that for all $\nu\in\mathcal{P}(E)$,
	\begin{equation}\begin{split}
		I_{\phi(\alpha)} (\mathfrak{m}_{_{\partial G}}\times\nu) 
			&\ge p\sum_{l=0}^\infty \frac{p^{-l}}{|G_l|} 
			\iint_{|t-s|\le l^{-1}}
			\frac{\nu(ds)\, \nu(dt)}{
			|t-s|^\alpha}\\
		&\ge p\sum_{l=0}^\infty \frac{p^{-l} l^{\alpha}}{|G_l|} 
			\int \nu\left(\left(
			t-\frac{1}{l} ~,~ t+\frac{1}{l} \right)\right)\, \nu(dt).
	\end{split}\end{equation}
	Because $\beta>\dimp D$, the density theorem of
	Taylor and Tricot \ycite{TaylorTricot}*{Theorem 5.4} implies that
	\begin{equation}\begin{split}
		\liminf_{\e\to 0^+}\frac{1}{\e^\beta}\int \nu\left((t-\e
			~,~ t+\e)\right) \, \nu(dt) 
			&\ge \int \liminf_{\e\to 0^+}\frac{\nu\left((t-\e
			~,~ t+\e)\right)}{\e^\beta}\, \nu(dt)\\
		&=\infty.
	\end{split}\end{equation}
	We have also applied Fatou's lemma. Thus, 
	there exists $c>0$ such that
	for all $\nu\in\mathcal{P}(E)$,
	\begin{equation}\begin{split}
		I_{\phi(\alpha)} (\mathfrak{m}_{_{\partial G}}\times\nu) 
			&\ge c\sum_{l=0}^\infty \frac{p^{-l} l^{\alpha-\beta}}{|G_l|}\\
		&=\infty;
	\end{split}\end{equation}
	see \eqref{eq:dimh}.
	It follows from Theorem \ref{th:dimh:SS} that $\P_p$-almost
	surely,
	$\dimh (S(G)\cap D) \le \alpha$. 
	Let $\beta\downarrow\Delta$ and then $\alpha\downarrow
	\dimh S(G)+\Delta-1$, in this order, to finish.
\end{proof}

\section{\bf On Strong $\beta$-Sets}\label{sec:examples}
An anonymous referee has pointed out
that the abstract capacity condition of Corollary
\ref{co:main} is in general difficult to check.
And we agree. In the previous section we showed how 
such computations can be carried out when $G$ is spherically
symmetric but $D$ is arbitrary. The goal of this section is to provide
similar types of examples in the case that
$G$ is arbitrary but $D$ is a ``nice'' set.

Henceforth we choose and fix some
$\beta\in(0\,,1]$, and 
assume that the target set $D\subset[0\,,1]$ is a
\emph{strong $\beta$-set}; this is a stronger condition
than the better known $s$-set condition of \ocite{Besicovitch}. Specifically, 
we assume that there exists
$\sigma\in\mathcal{P}(D)$ and constant $c_1,c_2\in(0\,,\infty)$, 
such that
\begin{equation}\label{eq:sigma}
	c_1 \e^\beta\le
	\sigma( [x-\e\,,x+\e]) \le c_2 \e^\beta
	\qquad{}^\forall x\in D,\, \e\in(0\,,1).
\end{equation}
We can combine the density theorems of
\ocite{Frostman} together with that of
\ocite{TaylorTricot} to find that
the packing and Hausdorff dimensions of $D$ agree,
and are both equal to $\beta$.
Next we present an amusing example
from symbolic dynamics; other examples abound.

\begin{example}[Cantor Sets]\label{example:Cantor}
	Choose and fix an integer $b\ge 2$.
	We can write any $x\in[0\,,1]$ as
	$x=\sum_{j=1}^\infty b^{-j} x_j$, where
	$x_j\in\{0\,,\ldots,b-1\}$. In case of ambiguities
	we opt for the infinite expansion always. This defines
	the base-$b$ digits $x_1,x_2,\ldots$ of $x$ uniquely.
	Let $B$ denote a fixed subset of $\{0\,,\ldots,b-1\}$,
	and define $D$ to be the closure of
	\begin{equation}
		D_0:= \left\{ x\in[0\,,1]:\ x_j\in B
		\text{ for all }j\ge 1 \right\}.
	\end{equation}
	Then, $D$ is a $\beta$-set  with
	$\beta:={\log_b |B|}$, where
	$\log_b$ denotes the base-$b$ logarithm and
	$|B|$ the cardinality of $B$. Indeed, let $X_1,X_2,\ldots$
	denote i.i.d.\ random variables with uniform distribution on 
	$B$, and observe that for all integers $n\ge 1$ and all $x\in D$,
	\begin{equation}
		\P\left\{ X_1=x_1 \,, \ldots, X_n=x_n\right\}
		= |B|^{-n} = b^{-n\beta}.
	\end{equation}
	If $y\in[0\,,1]$ satisfies $y_1=x_1,\ldots,y_n=x_n$,
	then certainly $|x-y|\le b\sum_{j=n+1}^\infty
	b^{-j}:= M b^{-n}$. Conversely, if $|x-y|\le b^{-n}$
	then it must be the case that
	$y_1=x_1,\ldots,y_n=x_n$. Let $\sigma$ denote the 
	distribution of $X=\sum_{j=1}^\infty b^{-j} X_j$,
	and we know a priori that $\sigma
	\in\mathcal{P}(D)$. In addition
	for all $x\in D$,
	\begin{equation}
		\sigma\left(\left[ x - b^{-n}\,,
		x+ b^{-n}
		\right]\right)
		\le b^{-n\beta}\le \sigma\left( \left[
		x-Mb^{-n}\,, x+Mb^{-n}\right]\right).
	\end{equation}
	A direct monotonicity argument proves the assertion
	that $D$ is a $\beta$-set with 
	$\beta:=\log_b |B|$. We note further that
	if $b:=3$ and $B:=\{0\,,2\}$ then $D$ is nothing more
	than the usual ternary Cantor set in $[0\,,1]$,
	$\sigma$ is the standard Cantor--Lebesgue measure,
	and $\beta=\log_3 2$. For another noteworthy example
	set $B:=\{0\,,\ldots,b-1\}$ to find that $D=[0\,,1]$,
	$\sigma$ is the standard Lebesgue measure on $[0\,,1]$,
	and $\beta:=1$.
\end{example}

\begin{theorem}\label{th:beta-set}
	Suppose $G$ is an arbitrary locally finite tree
	and $D$ is a strong $\beta$-set for some
	$\beta\in(0\,,1]$. 
	Then, $\P_p(\cup_{t\in D}
	\{\rho\stackrel{t}{\leftrightarrow}\infty\})>0$ iff
	$D$ has positive $g$-capacity, where
	\begin{equation}
		g(v\,,w) := \frac{1}{|v\wedge w|^{\beta}\
		p^{|v\wedge w|}}\qquad
		{}^\forall v,w\in\partial G.
	\end{equation}
\end{theorem}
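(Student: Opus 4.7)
The approach is to invoke Corollary \ref{co:main} and reduce the claim to the equivalence
\[
	\mathrm{Cap}_h\left(\partial G\times D\right)>0\qquad\Longleftrightarrow\qquad \mathrm{Cap}_g\left(\partial G\right)>0.
\]
The engine of the proof is a two-sided estimate for the $\sigma$-averaged $h$-kernel: for every integer $l\ge 1$,
\begin{equation}\label{eq:plan-key}
	c'\, p^{-l}\,l^{-\beta}\ \le\ \iint_{D\times D} \left(1+\tfrac{q}{p}e^{-|s-t|}\right)^l\sigma(ds)\,\sigma(dt)\ \le\ C'\,p^{-l}\,l^{-\beta},
\end{equation}
with positive constants depending only on $p$, $\beta$, $c_1$, $c_2$. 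For the upper bound I would rewrite the integrand as $p^{-l}(1-q(1-e^{-|s-t|}))^l$ and use the elementary estimate $(1-q(1-e^{-x}))^l\le \exp(-qlx/2)$ on $[0\,,1]$ (exactly as in the proof of Proposition \ref{pr:dimh:SS}), then integrate by parts against the distribution function of $|s-t|$ under $\sigma\times\sigma$, invoking the Frostman upper estimate $(\sigma\times\sigma)\{|s-t|\le r\}\le c_2' r^\beta$ from \eqref{eq:sigma}. For the matching lower bound I would restrict to the strip $|s-t|\le 1/l$, on which $(1-q(1-e^{-x}))^l\ge p$ (since $1-e^{-x}\le x$), and then apply the Frostman lower estimate $(\sigma\times\sigma)\{|s-t|\le 1/l\}\ge c_1\,l^{-\beta}$.

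Given \eqref{eq:plan-key}, the direction $\mathrm{Cap}_g(\partial G)>0 \Rightarrow \mathrm{Cap}_h(\partial G\times D)>0$ is immediate: if $\mu\in\mathcal{P}(\partial G)$ satisfies $I_g(\mu)<\infty$, then by Fubini the product measure $\lambda:=\mu\times\sigma\in\mathcal{P}(\partial G\times D)$ has
\[
	I_h(\lambda)=\iint\mu(dv)\mu(dw)\iint\left(1+\tfrac qp e^{-|s-t|}\right)^{|v\wedge w|}\sigma(ds)\sigma(dt)\le C'\,I_g(\mu)<\infty.
\]

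The main obstacle is the reverse implication. Given $\lambda$ with $I_h(\lambda)<\infty$, the plan is to show that the projection $\mu:=\pi_1\lambda$ satisfies $I_g(\mu)<\infty$. The same pointwise lower bound $(1-q(1-e^{-x}))^l\ge p$ on $\{|s-t|\le 1/l\}$ yields
\[
	I_h(\lambda)\ \ge\ p\sum_{l\ge 1}p^{-l}\,(\lambda\times\lambda)\{|v\wedge w|=l,\ |s-t|\le 1/l\},
\]
and the target estimate is
\[
	(\lambda\times\lambda)\{|v\wedge w|=l,\ |s-t|\le 1/l\}\ \gtrsim\ l^{-\beta}(\mu\times\mu)\{|v\wedge w|=l\},
\]
which would give $I_h(\lambda)\gtrsim I_g(\mu)$. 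The available tool is the \emph{uniform} pigeonhole bound that $D$ being a strong $\beta$-set forces $(\rho\times\rho)\{|s-t|\le r\}\ge c\,r^\beta$ for every $\rho\in\mathcal{P}(D)$: cover $D$ by $N=O(r^{-\beta})$ intervals of length $r$ (possible since the strong $\beta$-set condition implies $\mathcal{N}(D,r)\le C r^{-\beta}$), and note that $(\rho\times\rho)\{|s-t|\le r\}\ge \sum_i\rho(I_i)^2\ge 1/N\gtrsim r^\beta$. The delicate technical step, and the place I expect the real difficulty to lie, is to transfer this single-measure bound to the bilinear form $(\kappa_v\times\kappa_w)\{|s-t|\le 1/l\}$ averaged over pairs with $|v\wedge w|=l$; the plan is to carry this out via a tree-level partitioning of $\partial G$ along depth-$l$ ancestors, which groups pairs of rays that share a common prefix and allows one to replace conditional measures $\kappa_v,\kappa_w$ on a common subtree by their common average without losing more than a constant in the $h$-energy. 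Once that averaging step is executed, one obtains $I_g(\mu)\le c^{-1}I_h(\lambda)<\infty$, completing the proof.
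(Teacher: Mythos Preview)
Your two-sided estimate on $\iint_{D\times D}(1+\tfrac{q}{p}e^{-|s-t|})^l\,\sigma(ds)\,\sigma(dt)$ and the forward implication (take $\lambda=\mu\times\sigma$) are correct and match the paper. The reverse implication, however, has a real gap. The pigeonhole bound you invoke is for a \emph{single} probability measure, $(\rho\times\rho)\{|s-t|\le r\}\ge cr^\beta$; what you actually need is the bilinear version $(\kappa_v\times\kappa_w)\{|s-t|\le r\}\gtrsim r^\beta$, and this can vanish outright (take $\kappa_v,\kappa_w$ point masses at distance bounded away from $0$). Your proposed repair---averaging $\kappa_v$ over depth-$l$ subtrees---does not help: the map $(\kappa_v,\kappa_w)\mapsto(\kappa_v\times\kappa_w)\{|s-t|\le r\}$ is bilinear, so averaging merely replaces each factor by another probability measure on $D$, and you are right back at a product of two possibly disparate measures. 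I do not see how to close this without a new idea, and it is not even clear that the projection claim $I_h(\lambda)<\infty\Rightarrow I_g(\pi_1\lambda)<\infty$ holds.

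The paper avoids this difficulty by \emph{not} reducing through Corollary~\ref{co:main}. Instead it reruns the second-moment/martingale argument of Theorem~\ref{th:main} restricted to product measures $\mu\times\sigma$ with $\sigma$ fixed, following the template of Theorem~\ref{th:YuvalConj} with the roles of $\partial G$ and $D$ interchanged. The decisive observation is that the \emph{lower} bound in \eqref{eq:sigma} yields the \emph{single}-integral estimate
\[
\int_D\Bigl(1+\tfrac{q}{p}e^{-|t-s|}\Bigr)^l\,\sigma(dt)\ \ge\ c\,l^{-\beta}p^{-l}\qquad\text{uniformly over }s\in D,
\]
so the right-hand side of \eqref{eq:key2} is bounded below by $c\int_{\partial G}g(v,w)\,\mu(dv)\cdot\1_{\{\rho\stackrel{s}{\leftrightarrow}w\}}$, which no longer depends on $s$. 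Taking $\mu$ to be the law of the first percolating ray $\mathbf{w}$ and finishing as in \eqref{eq:key3}--\eqref{eq:key5} gives $\P_p(\cup_{t\in D}\{\rho\stackrel{t}{\leftrightarrow}\infty\})\le C\,\mathrm{Cap}_g(\partial G)$ directly, with the matching lower bound coming from Paley--Zygmund applied to $Z(\mu\times\sigma)$ together with $I_h(\mu\times\sigma)\asymp I_g(\mu)$. Fixing $\sigma$ from the start and varying only $\mu\in\mathcal{P}(\partial G)$ is precisely what lets one bypass the hard capacity-equivalence statement you are attempting.
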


\begin{remark}\label{rem:interpolate}
	It is easy to see that $D:=\{0\}$ is a strong
	$0$-set with $\sigma:=\delta_0$, and
	$D:=[0\,,1]$ is a strong $1$-set with
	$\sigma$ being the Lebesgue measure on $[0\,,1]$;
	see the final sentence in Example
	\ref{example:Cantor}.
	Therefore, it follows that Theorem \ref{th:beta-set}
	contains both Lyons's condition \eqref{eq:Lyons},
	as well as the condition 
	\eqref{eq:HPS} of H\"aggstr\"om et al.\
	as special cases.
\end{remark}

\begin{proof}[Sketch of proof of Theorem \ref{th:beta-set}]
	We can employ a strategy similar to
	the one we used to prove Theorem 
	\ref{th:YuvalConj}, and establish first that $\P_p(\cup_{t\in D}
	\{\rho\stackrel{t}{\leftrightarrow}\infty\})>0$
	if and only if $\text{Cap}_\psi(\partial G)>0$,
	where $\psi(v\,,w):=R(|v\wedge w|)$ and
	\begin{equation}
		R(n) := \iint
		\left( 1 + \frac{q}{p} e^{-|t-s|}\right)^{n}
		\, \sigma(ds)\,\sigma(dt)\qquad{}^\forall n\ge 1.
	\end{equation}
	It might help to recall that $\sigma$
	was defined in \eqref{eq:sigma}. We integrate by parts
	as in \eqref{eq:IBP} and then apply \eqref{eq:sigma}
	to find that
	\begin{equation}
		a_1 n \int_0^1
		x^\beta\left( 1 + \frac{q}{p}e^{-x}\right)^{n-1}\, dx
		\le R(n) \le
		a_2 n \int_0^1
		x^\beta\left( 1 + \frac{q}{p}e^{-x}\right)^{n-1}\, dx,
	\end{equation}
	where the $a_i$'s are positive and finite
	constants that do not depend on $n$. From here
	it is possible to check that
	$R(n)$ is bounded above and below
	by constant multiples of $n^{-\beta} p^{-n}$. Indeed,
	we use $\int_0^1(\cdots)\ge\int_{1/n}^1(\cdots)$
	to obtain a lower bound. For an upper bound, we
	decompose $\int_0^1(\cdots)$ as $\int_0^{1/n}(\cdots)+
	\int_{1/n}^{1/2}(\cdots) + \int_{1/2}^1(\cdots)$,
	and verify that the first integral dominates the other
	two for all large values of $n$. This has the desired effect.
\end{proof}

\begin{bibdiv}
\begin{biblist}
\bib{BPP}{article}{
   author={Benjamini, Itai},
   author={Pemantle, Robin},
   author={Peres, Yuval},
   title={Martin capacity for Markov chains},
   journal={Ann. Probab.},
   volume={23{\it (3)}},
   date={1995},
   pages={1332--1346},
}
\bib{Bertoin}{book}{
   author={Bertoin, Jean},
   title={L\'evy Processes},
   publisher={Cambridge University Press},
   place={Cambridge},
   date={1996},
}
\bib{Besicovitch}{article}{
   author={Besicovitch, A. S.},
   title={A theorem on $s$-dimensional measure of sets of points},
   journal={Proc. Cambridge Philos. Soc.},
   volume={38},
   date={1942},
   pages={24--27},
}
\bib{Bochner}{article}{
   author={Bochner, S.},
   title={Diffusion equation and stochastic processes},
   journal={Proc. Nat. Acad. Sci. U. S. A.},
   volume={35},
   date={1949},
   pages={368--370},
}
\bib{DubinsFreedman}{incollection}{
   author={Dubins, Lester E.},
   author={Freedman, David A.},
   title={Random distribution functions},
   booktitle={Proc. Fifth Berkeley Sympos. Math. Statist. and Probability, 1965/1966},
   address={Berkeley, Calif.},
   publisher={Univ. California Press},
   place={Berkeley, Calif.},
   date={1967},
   pages={Vol. II: Contributions to Probability Theory, Part 1, pp.
   183--214},
}
\bib{Evans}{article}{
   author={Evans, Steven N.},
   title={Polar and nonpolar sets for a tree indexed process},
   journal={Ann. Probab.},
   volume={20{\it (2)}},
   date={1992},
   pages={579--590},
}
\bib{Frostman}{article}{
    author={Frostman, Otto},
     title={Potential d'equilibre et capacit\'e des ensembles
	     avec quelques applications \'a la 
	     th\'eorie des fonctions},
  language={French},
   journal={Medd. Lunds Univ. Mat. Sem.},
    volume={3},
      date={1935},
     pages={1\ndash 118},
}
\bib{Grimmett}{book}{
   author={Grimmett, Geoffrey},
   title={Percolation},
   edition={Second Edition},
   publisher={Springer-Verlag},
   place={Berlin},
   date={1999},
}
\bib{HaggstromEtAl}{article}{
   author={H{\"a}ggstr{\"o}m, Olle},
   author={Peres, Yuval},
   author={Steif, Jeffrey E.},
   title={Dynamical percolation},
   journal={Ann. Inst. H. Poincar\'e Probab. Statist.},
   volume={33{\it (4)}},
   date={1997},
   pages={497--528},
}
\bib{Hunt3}{article}{
   author={Hunt, G. A.},
   title={Markoff processes and potentials. III},
   journal={Illinois J. Math.},
   volume={2},
   date={1958},
   pages={151--213},
}
\bib{Hunt1}{article}{
   author={Hunt, G. A.},
   title={Markoff processes and potentials. I, II},
   journal={Illinois J. Math.},
   volume={1},
   date={1957},
   pages={44--93, 316--369},
}
\bib{Kanda}{article}{
   author={Kanda, Mamoru},
   title={Characterization of semipolar sets for processes with stationary
   independent increments},
   journal={Z. Wahrscheinlichkeitstheorie und Verw. Gebiete},
   volume={42{\it (2)}},
   date={1978},
   pages={141--154},
}
\bib{Khoshnevisan}{book}{
   author={Khoshnevisan, Davar},
   title={Multiparameter Processes},
   publisher={Springer-Verlag},
   place={New York},
   date={2002},
}
\bib{McKean}{article}{
   author={McKean, Henry P., Jr.},
   title={Sample functions of stable processes},
   journal={Ann. of Math. (2)},
   volume={61},
   date={1955},
   pages={564--579},
}
\bib{Lyons:89}{article}{
   author={Lyons, Russell},
   title={The Ising model and percolation on trees and tree-like graphs},
   journal={Comm. Math. Phys.},
   volume={125{\it (2)}},
   date={1989},
   pages={337--353},
}
\bib{Lyons:90}{article}{
   author={Lyons, Russell},
   title={Random walks and percolation on trees},
   journal={Ann. Probab.},
   volume={18{\it (3)}},
   date={1990},
   pages={931--958},
}
\bib{Lyons:92}{article}{
   author={Lyons, Russell},
   title={Random walks, capacity and percolation on trees},
   journal={Ann. Probab.},
   volume={20{\it (4)}},
   date={1992},
   pages={2043--2088},
}
\bib{Marchal}{article}{
   author={Marchal, Philippe},
   title={The best bounds in a theorem of Russell Lyons},
   journal={Electron. Comm. Probab.},
   volume={3},
   date={1998},
   pages={91--94 (electronic)},
}
\bib{Mattila}{book}{
   author={Mattila, Pertti},
   title={Geometry of Sets and Measures in Euclidean Spaces},
   publisher={Cambridge University Press},
   place={Cambridge},
   date={1995},
}
\bib{Nelson}{article}{
   author={Nelson, Edward},
   title={A functional calculus using singular Laplace integrals},
   journal={Trans. Amer. Math. Soc.},
   volume={88},
   date={1958},
   pages={400--413},
}
\bib{PaleyZygmund}{article}{
   author={Paley, R. E. A. C.},
   author={Zygmund, A.},
   title={A note on analytic functions in the unit circle},
   journal={Proc. Cambridge Phil. Soc.},
   volume={28},
   date={1932},
   pages={266--272},
}
\bib{PemantlePeres}{article}{
   author={Pemantle, Robin},
   author={Peres, Yuval},
   title={Critical random walk in random environment on trees},
   journal={Ann. Probab.},
   volume={23{\it (1)}},
   date={1995},
   pages={105--140},
}
\bib{PeresSteif}{article}{
   author={Peres, Yuval},
   author={Steif, Jeffrey E.},
   title={The number of infinite clusters in dynamical percolation},
   journal={Probab. Theory Related Fields},
   volume={111{\it (1)}},
   date={1998},
   pages={141--165},
}
\bib{Sato}{book}{
   author={Sato, Ken-iti},
   title={L\'evy Processes and Infinitely Divisible Distributions},
   note={Translated from the 1990 Japanese original;
	   Revised by the author},
   publisher={Cambridge University Press},
   place={Cambridge},
   date={1999},
}
\bib{Sullivan}{article}{
   author={Sullivan, Dennis},
   title={Entropy, Hausdorff measures old and new, and limit sets of
   geometrically finite Kleinian groups},
   journal={Acta Math.},
   volume={153{\it (3-4)}},
   date={1984},
   pages={259--277},
}
\bib{TaylorTricot}{article}{
   author={Taylor, S. James},
   author={Tricot, Claude},
   title={Packing measure, and its evaluation for a Brownian path},
   journal={Trans. Amer. Math. Soc.},
   volume={288{\it (2)}},
   date={1985},
   pages={679--699},
}
\bib{Tricot}{article}{
   author={Tricot, Claude, Jr.},
   title={Two definitions of fractional dimension},
   journal={Math. Proc. Cambridge Philos. Soc.},
   volume={91{\it (1)}},
   date={1982},
   pages={57--74},
}
\end{biblist}
\end{bibdiv}

\end{document}